\documentclass[11pt]{article} 

\usepackage[utf8]{inputenc} 
\usepackage{geometry} 
\geometry{a4paper} 
\usepackage{graphicx} 
\usepackage{authblk}

\usepackage{amsmath,amsthm,amsfonts}
\usepackage{marvosym}
\usepackage{graphicx}
\usepackage{color}
\usepackage{epsfig}
\usepackage{fullpage}
\theoremstyle{plain}
\newtheorem{thm}{Theorem}[section]
\newtheorem{cor}[thm]{Corollary}
\newtheorem{defi}[thm]{Definition}
\newtheorem{conj}[thm]{Conjecture}
\newtheorem{prop}[thm]{Proposition}
\newtheorem{lemma}[thm]{Lemma}

\newtheorem*{example}{Example}
\newtheorem*{claim*}{Claim A1}
\newtheorem*{claim2*}{Claim 1}
\newtheorem*{claim3*}{Claim 2}

\newtheorem*{thior1}{Theorem A}
\newtheorem*{thior2}{Theorem B}
\newtheorem*{thior3}{Theorem C}

\theoremstyle{definition}

\newtheorem{remark}[thm]{Remark}

\newcommand{\comment}[1]{}




\usepackage{booktabs} 
\usepackage{array} 
\usepackage{paralist} 
\usepackage{verbatim} 
\usepackage{subfig} 

\usepackage{fancyhdr} 
\pagestyle{fancy} 
\lhead{}\chead{}\rhead{}
\lfoot{}\cfoot{\thepage}\rfoot{}

\usepackage{sectsty}
\allsectionsfont{\sffamily\mdseries\upshape} 

\usepackage[nottoc,notlof,notlot]{tocbibind} 
\usepackage[titles,subfigure]{tocloft} 


\title{On lattice cohomology and left-orderability}
\author{Mauro Mauricio \\ Alfr\'ed R\'enyi Institute of Mathematics \\ \vskip0.1in email: \textit{mauro.mauricio@imperial.ac.uk}}
\date{\today}

\begin{document}
\maketitle

\begin{abstract} It has been recently conjectured by Boyer-Gordon-Watson \cite{BGW} that a closed, orientable, irreducible $3$-manifold $M$ is a Heegaard Floer $L$-space if and only if $\pi_1(M)$ is not left-orderable. In this article, we study this conjecture from the point of view of lattice cohomology, an invariant introduced by N\'emethi in \cite{Nem1} which is conjecturally isomorphic to the $HF^+$ version of Heegaard Floer homology. Using the invariant's combinatorial tractability as a stepping stone, we produce some interesting quite general families of negative-definite graph manifolds against which to test the Boyer-Gordon-Watson conjecture. Then, using horizontal foliation arguments and direct manipulation of the fundamental group, we prove that these families do indeed satisfy the conjecture.

\end{abstract}

\section{Introduction}

One of the interesting present challenges of $3$-manifold theory is to relate  Ozsv\'ath and Szab\'o's celebrated Heegaard Floer invariants \cite{OS2} to the fundamental group. Until recently, the only known connection (a very tenuous one) was an observation of Yi Ni \cite{Ni}, who remarked that knot Floer homology $\widehat{HFK}$ detects whether or not the fundamental group of a knot complement has finitely generated commutator subgroup, something one obtains by putting together the deep fact that $\widehat{HFK}$ detects fibred knots (the main result of \cite{Ni}) with the Stallings fibration theorem. Since then, a more substantial connection between these two topics seems to have appeared. Recall that a Heegaard Floer homology $L$-space is a closed oriented rational homology sphere $M$ which satisfies $HF^+_{red}(M,\mathfrak{t})=0$ for all $\mathfrak{t}\in Spin^c(M)$. Among the class of rational homology spheres, $L$-spaces have the simplest Heegaard Floer invariants. Also, recall that a nontrivial group is said to be left-orderable if and only if it admits a strict total ordering that is invariant under left multiplication (as a matter of convenience, we declare the trivial group to be non-left-orderable).  In \cite{BGW}, Boyer-Gordon-Watson propose the following connection between Heegaard Floer homology and the fundamental group:

\begin{conj} \label{LO} A closed, orientable, irreducible rational homology sphere $M$ is a Heegaard Floer $L$-space if and only if $\pi_1(M)$ is not left-orderable.
\end{conj}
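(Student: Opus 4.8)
The full statement is one of the central open problems of the subject, so the plan is \emph{not} to establish it in complete generality but to verify it on large, explicitly engineered families of negative-definite graph manifolds, using the combinatorial transparency of N\'emethi's lattice cohomology \cite{Nem1} to control the $L$-space side and foliation/group-theoretic techniques to control the orderability side. \textbf{Step 1: a combinatorial census of $L$-spaces.} I would restrict to rational homology spheres $M$ bounding a negative-definite plumbing along a tree $\Gamma$. There the lattice cohomology $\mathbb{H}^{*}(\Gamma,\mathfrak{t})$ is defined for each $\mathfrak{t}\in Spin^c(M)$, and (granting its conjectural identification with $HF^+$) $M$ is an $L$-space exactly when $\mathbb{H}^{>0}(\Gamma,\mathfrak{t})=0$ for all $\mathfrak{t}$, equivalently when every associated graded root is trivial. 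First I would isolate a flexible family of trees --- varying the central framing and the number and arithmetic type of the legs --- and compute the reduced lattice cohomology purely combinatorially, thereby splitting the family into an $L$-space part $\mathcal{L}$ and a non-$L$-space part $\mathcal{N}$. This step is bookkeeping with the weight and Euler-characteristic functions attached to $\Gamma$.

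\textbf{Step 2: the non-$L$-spaces are left-orderable.} For $M\in\mathcal{N}$ the goal is to exhibit a left-ordering of $\pi_1(M)$. The route is to build a horizontal (hence taut, co-oriented) foliation on $M$: on each Seifert-fibered JSJ block the failure of lattice cohomology to vanish pushes the Seifert invariants into the range where a horizontal foliation exists by the Eisenbud--Hirsch--Neumann and Jankins--Neumann criteria, and one then glues these block foliations across the JSJ tori by matching boundary slopes. A global taut/horizontal foliation yields a nontrivial fixed-point-free action of $\pi_1(M)$ on $\R$ by orientation-preserving homeomorphisms (Calegari--Dunfield, Boyer--Rolfsen--Wiest), and hence a left-ordering.

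\textbf{Step 3: the $L$-spaces are not left-orderable.} For $M\in\mathcal{L}$ I would argue directly with the fundamental group. Starting from the plumbing presentation of $\pi_1(M)$ --- vertex generators, edge commutation relations, and the central fiber class --- I would assume a left-ordering and derive a contradiction from the sign constraints it imposes: negative-definiteness together with the smallness of the framings forces the fiber class and the vertex generators into mutually incompatible positivity requirements, mirroring the known Seifert-fibered picture in which an $L$-space Seifert space has non-left-orderable $\pi_1$ because the base-orbifold relation cannot be made order-coherent. Gluing the Seifert pieces then propagates the obstruction across the JSJ tori.

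\textbf{The main obstacle.} The hard part will be the gluing in Step 2: a horizontal foliation on a Seifert piece meets each boundary torus in a linear foliation whose slope is rigidly pinned by the Seifert data, and forcing these slopes to agree on the two sides of every JSJ torus is a genuinely delicate matching problem --- this is precisely why the plumbing families must be chosen with care rather than taken arbitrarily. A second, softer difficulty is Step 3: there is no general machine turning ``no horizontal foliation'' into ``no left-ordering'', so each family's group presentation has to be wrestled with essentially by hand.
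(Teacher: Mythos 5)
Your proposal matches the paper's strategy essentially point for point: the paper does not prove the conjecture in general but verifies it on families of negative-definite plumbing trees sorted by Laufer's algorithm into non-$L$-spaces (very bad vertex, proper $E_8$ subgraph) and $L$-spaces (insulated graphs), handling the former by inductively gluing horizontal foliations across JSJ tori with slopes controlled by the de-rationaliser, and the latter by deriving order-incompatibility directly from Mumford's plumbing presentation. Even the difficulties you flag --- the boundary-slope matching and the by-hand group argument --- are exactly where the paper's technical work (Lemmas on $\Gamma^v(1/\Delta_v)$ and the condition $A$/$A^*$ induction) is concentrated.
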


\noindent There is a growing body of evidence pointing towards the validity of Conjecture \ref{LO}: it is known to hold for $Sol$ manifolds and Seifert-fibered spaces  \cite{BGW}, and also for graph-manifold integral homology spheres \cite{BB}. Moreover, there are examples and some results in the realm of hyperbolic manifolds which support the conjecture, mostly obtained by surgery on knots in $S^3$ \cite{Li, Tran}.

\vskip 0.1in It seems plausible that taut foliations constitute the geometry behind Conjecture \ref{LO}: the conditions in the conjecture may be equivalent to the non-existence of co-orientable taut foliations. Recall that a codimension-one foliation $\mathcal{F}$ of a manifold $M$ is said to be taut if every leaf of $\mathcal{F}$ meets a loop which is transverse to $\mathcal{F}$. It is well-known that if $M$ admits a co-orientable taut foliation, then $M$ cannot be a Heegaard Floer $L$-space \cite{OS1}. Moreover, it is known that if $M$ admits a co-orientable taut foliation, then the commutator subgroup $[\pi_1(M), \pi_1(M)]$ is left-orderable \cite{BRW}. In addition to this, in all the special cases described above for which Conjecture \ref{LO} is solved, the connection with taut foliations is also established.

\vskip 0.1in In this article, we seek to add to this evidence. We will consider manifolds which are given by plumbing along a negative-definite weighted tree $\Gamma$. We will denote such manifolds also by $\Gamma$, in an abuse of notation (this should cause no confusion). We will also require that the tree $\Gamma$ be minimal, that is, no valence-two or valence-one vertices should carry weight equal to $-1$ (there is no loss in generality in demanding this, since we can always blow down to bring $\Gamma$ into minimal form). Our strategy is to use lattice cohomology $\mathbb{H}(\Gamma)$, an invariant of negative-definite plumbings introduced by N\'emethi in \cite{Nem1}, which is conjecturally isomorphic to the $HF^+$ version of Heegaard Floer homology. In this theory, deciding whether $\Gamma$ is a ``lattice cohomology $L$-space'' (\textit{i.e.} whether $\mathbb{H}_{red}=0$) or not can be done fairly simply via a combinatorial procedure known as Laufer's algorithm \cite{Laufer} (for the reader unfamiliar with this, consult the next section for more details). Using this combinatorial nature of lattice cohomology, it is fairly easy to construct interesting families of manifolds $\Gamma$ for which the decision process above is particularly quick and straightforward. For some large such families, we study the left-orderability of their fundamental group, and prove that they support Conjecture \ref{LO}. In the following, we introduce the lexicon required in part to state our main results:

\begin{defi} Let $\Gamma$ be a weighted graph, and let $v\in \Gamma$ be a vertex. Let $w(v)$ be the integer weight associated with $v$, and $n(v)$ to be the number of neighbours of $v$. Then

\begin{enumerate}

\item The quantity $d(v)=n(v)-|w(v)|$ is said to be the deficiency of $v$.
\item A vertex $v$ such that $n(v)=1$ is said to be a leaf.
\item A vertex $v$ such that $n(v)=2$ is said to be a bamboo.
\item A vertex $v$ such that $n(v)>2$ is said to be a node.
\item If $d(v)\leq0$, $v$ is said to be good.
\item If $d(v)\geq 1$, $v$ is said to be bad.
\item It $d(v)\geq 2$, $v$ is said to be very bad.

\end{enumerate}
\end{defi} 

\begin{defi} \label{insu} Suppose that $\Gamma$ is a negative-definite plumbing tree with no very bad vertices, subject to the following conditions:

\begin{enumerate}
\item  Every good vertex $v$ satisfies $d(v)+K(v)\leq 0$, where $K(v)$ is the number of bad vertex neighbours of $v$. 
\item No two bad vertices are neighbours.
\end{enumerate}

\noindent Then, $\Gamma$ is said to be insulated.

\end{defi}

\noindent Let us emphasise once again that we only concern ourselves with negative-definite trees. Our main results are the following:

\begin{thior1}\label{A} If $\Gamma$ has a very bad vertex, then $\pi_1(\Gamma)$ is left-orderable. Moreover, $\Gamma$ admits a co-orientable taut foliation, and therefore is not a Heegaard Floer homology L-space.
\end{thior1}

\begin{thior2} \label{B} Suppose that $\Gamma$ contains a proper negative-definite $E_8$ subgraph. Then, $\pi_1(\Gamma)$ is left-orderable. Moreover, $\Gamma$ admits a co-orientable taut foliation, and therefore is not a Heegaard Floer homology L-space.
\end{thior2}

\begin{remark} These two results are false if one drops the negative-definite condition, as can be seen in the examples below.

\begin{center}
\includegraphics[scale=0.3]{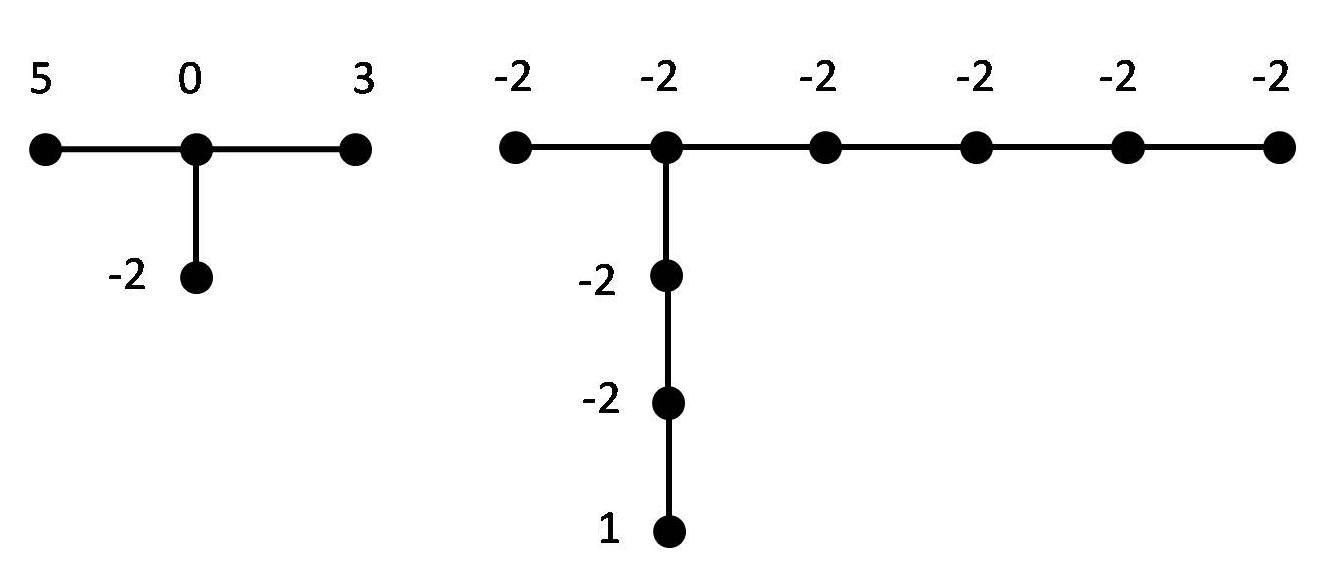}
\end{center}

\noindent The example on the left represents the Poincar\'e homology sphere $\Sigma(2,3,5)$, and it is well-known that this manifold is an $L$-space and that its fundamental group is not left-orderable (this group is finite, and it is easy to see that left-orderable groups must be torsion-free). For the example on the right, we can blow down the $+1$ leaf to obtain a negative-definite configuration, and it can be checked easily that this is a Heegaard Floer homology $L$-space, and that its fundamental group is not left-orderable (see Theorem \ref{stipi} in the next section).
\end{remark}

On the converse statement in Conjecture \ref{LO}, we prove the following:

\begin{thior3} \label{C} Suppose that $\Gamma$ is insulated. Then, $\pi_1(\Gamma)$ is not left-orderable. Moreover, $\Gamma$ is a Heegaard Floer $L$-space. 
\end{thior3}

\begin{example} The following graph is insulated:
\end{example}

\begin{figure}[h!]
\centering
\includegraphics[scale=0.30]{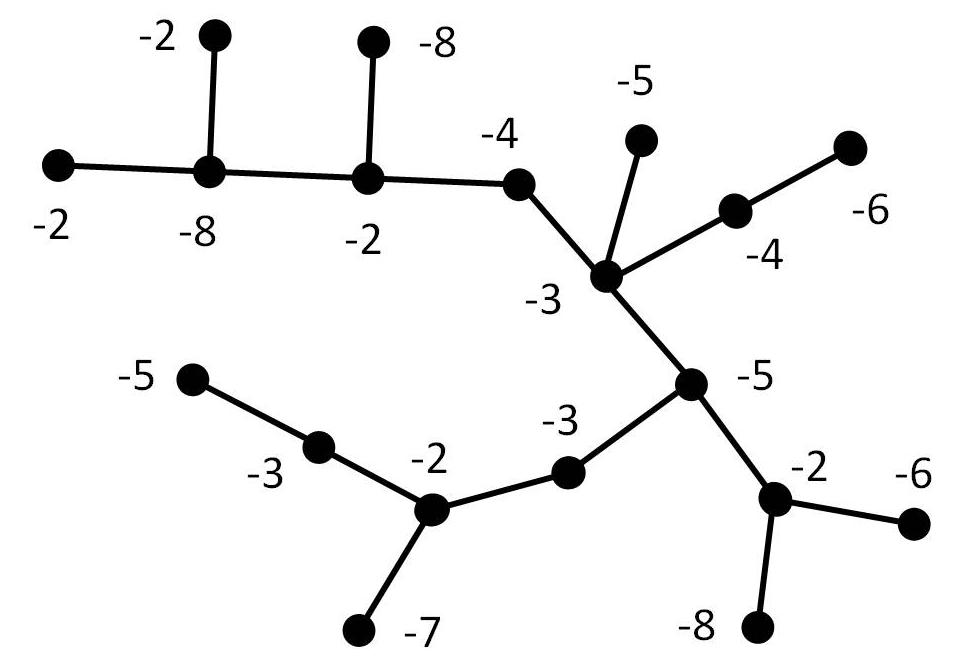}
\end{figure}

\begin{remark} It is perhaps worth pondering upon the strength of Theorem C \textit{i.e.} how restrictive the insulation condition is. It is easy to see that if two bad vertices are neighbours, then $\Gamma$ is not a lattice cohomology $L$-space, so that if one drops condition \textit{2} in Definition \ref{insu}, and if one believes in Conjecture \ref{LO} and that lattice cohomology is isomorphic to Heegaard Floer homology, the conclusion of Theorem C should no longer hold. Moreover, if we have a good vertex $x$ satisfying $d(v)+K(v)\geq 2$, then $\Gamma$ is also not a lattice cohomology $L$-space, so similar remarks apply. In the ``critical'' case $d(v)+K(v)=1$, the answer can be extremely subtle and ``non-local'', in the sense that different regions of the graph interact in a possibly very complicated manner, so that it becomes impossible to characterise lattice cohomology $L$-spaces in terms of local conditions on the vertices. Thus, we like to think of the insulation condition as somehow the most general local condition that can be put on the vertices to ensure that $\Gamma$ is a lattice cohomology $L$-space.  
\end{remark}

\vskip 0.1in This article is organised as follows: in section 2, we introduce the relevant background in lattice cohomology, with an emphasis on how to detect lattice cohomology $L$-spaces. We also recall several previously known results on left-orderability of $3$-manifold groups and taut foliations, which will be crucial for the proofs of theorems A and B. Finally, in section 3, we prove our results stated in the introduction.

\section*{Acknowledgements}

 We would like to thank Andr\'as N\'emethi and Andr\'as Stipsicz for inspiring conversations and for their interest in this work. Moreover, we would like to acknowledge the work of Clay-Lidman-Watson in  \cite{CLW}: even though we do not directly use any of their results (since then, stronger results have appeared in \cite{BB}), their paper served as an important source of inspiration for our present work, and was the crux of an earlier version. Finally, financial support from the  ERC LTDBud grant and
from the Lend\"ulet grant of the Hungarian Academy of Sciences is also gratefully acknowledged.

\section{Background Material}

\subsection{Plumbings and lattice cohomology} 

Let $\Gamma$ be a tree (not necessarily negative-definite) with $N$ vertices, $\{i\}_{i=1}^N$, such that each vertex $i$ is decorated with a pair of integers $(g_i,e_i)$, $g_i\geq 0$. This datum determines a closed, oriented $3$-manifold via the following construction: for each vertex $i$, take a  circle bundle over a closed, orientable surface of genus $g_i$ and with Euler number $e_i$. From each bundle, remove the regular neighbourhood of $n(i)$ circle fibers (recall that $n(i)$ is the number of neighbours of $i$). The result is a collection of bundles over surfaces of genera $g_i$, punctured $n(i)$ times, $F_{g_i}^{n(i)}$. It is a standard fact that these are homeomorphic to $F_{g_i}^{n(i)}\times S^1$. In particular, we can equip each (torus) boundary component $j$ of the $i$th punctured bundle with a canonical basis for its first homology, $\{\mu_{i,j}, \lambda_i\}$, where $\mu_{i,j}$ is the relevant component of $\partial F_{g_i}^{n(i)} $, and $\lambda_i$ is a circle fiber. We then glue two punctured bundles along boundary components if and only if the corresponding vertices are joined by an edge in $\Gamma$. On each  boundary component, this gluing is determined by a matrix $\phi\in GL(2,\mathbb{Z})$, once a basis for the first homology is chosen. With the choice of basis described above, the gluing matrix is taken to be

$$ \phi=\left( \begin{array}{cc}
0 & 1 \\
1 & 0 \\
 \end{array} \right) $$

\noindent for every boundary component. The manifold that results from this operation is called the plumbing along $\Gamma$, and is denoted also by $\Gamma$, as stated in the introduction. In particular, there is an associated intersection matrix, also denoted $\Gamma=(\gamma_{ij})$, whose diagonal entries are the Euler numbers $e_i$, and the off-diagonal entries satisfy $\gamma_{ij}=1$ if $i$ and $j$ are neighbours, $\gamma_{ij}=0$ otherwise. Now, it is an easy standard exercise in algebraic topology to show that the manifold $\Gamma$ is a rational homology sphere if and only if all the genera $g_i$ are equal to zero and the matrix $\Gamma$ is nonsingular. If that is the case, we omit the $g_i$ from the decoration of the tree $\Gamma$, and this becomes a graph with each vertex decorated by a single integer weight, $e_i$. Moreover, notice that the intersection matrix determines the manifold $\Gamma$.

\begin{remark} In terms of JSJ decompositions, plumbings correspond to manifolds whose JSJ pieces consist entirely of Seifert-fibered manifolds. In particular, it is easy to see the JSJ tori directly from a minimal graph $\Gamma$: they are in one-to-one correspondence with edges separating nodes of $\Gamma$.
\end{remark}

We now describe the elements of lattice cohomology that will be relevant to our work. This is not intended to be an introduction to the subject (we do not even present the definition), we merely expose the absolute minimum background required for understanding Laufer's algorithm. As stated in the introduction, lattice cohomology is an invariant introduced by N\'emethi in \cite{Nem1}, inspired by the theory of complex normal surface singularities: to a negative-definite plumbing graph $\Gamma$, one associates a topological invariant in the guise of a $\mathbb{Z}[U]$-module $\mathbb{H}(\Gamma)$, computed from the plumbing data. Just like Heegaard Floer homology, it splits as a direct sum over the $Spin^c$ structures of the manifold $\Gamma$. Moreover, in the case where $\Gamma$ is a rational homology sphere, we have one further splitting for every $Spin^c$ structure $\mathfrak{t}$ in $\Gamma$,

$$\mathbb{H}(\Gamma,\mathfrak{t})=\mathcal{T}^+\oplus \mathbb{H}_{red}(\Gamma,\mathfrak{t})$$

\noindent where $\mathcal{T}^+\simeq \mathbb{Z}[U,U^{-1}]/U\cdot\mathbb{Z}[U]$ and $\mathbb{H}_{red}$ is $\mathbb{Z}[U]$-torsion. Thus, in analogy with Heegaard Floer theory, we make the following definition:

\begin{defi} Suppose that $\Gamma$ is a rational homology sphere. If \;$\mathbb{H}_{red}(\Gamma,\mathfrak{t})=0$ for all $\mathfrak{t}\in Spin^c(\Gamma)$, we say that $\Gamma$ is a lattice cohomology $L$-space.
\end{defi}

\noindent When $\Gamma$ is a rational homology sphere, it is conjectured \cite{Nem1} that $\mathbb{H}(\Gamma,\mathfrak{t})\simeq HF^+(\Gamma,\mathfrak{t})$, for any $\mathfrak{t}\in Spin^c(\Gamma)$. This isomorphism has actually been verified in several instances, the following special case being of interest to us:

\begin{thm}\label{isoiso} (\cite{Nem2}) Suppose that $\Gamma$ is a lattice cohomology $L$-space. Then, $\Gamma$ is a Heegaard Floer $L$-space.
\end{thm}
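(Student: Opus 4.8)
The plan is to factor the implication through the notion of a \emph{rational} graph, reducing to a situation in which lattice cohomology and $HF^+$ are already known to agree. Recall that $\mathbb{H}^0(\Gamma,\mathfrak t)$ is built from the sublevel sets of a weight function on the plumbing lattice (equivalently, it is encoded by a graded root), while the higher modules $\mathbb{H}^q(\Gamma,\mathfrak t)$, $q\geq 1$, record the failure of these sublevel sets to be connected through each homological degree. The hypothesis that $\Gamma$ is a lattice cohomology $L$-space says precisely that, for every $\mathfrak t\in Spin^c(\Gamma)$, the associated graded root is a single infinite stem and there is no higher lattice cohomology, i.e. $\mathbb{H}^0(\Gamma,\mathfrak t)=\mathcal{T}^+$ and $\mathbb{H}^q(\Gamma,\mathfrak t)=0$ for all $q\geq 1$.

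First I would show that this forces $\Gamma$ to be rational, i.e. to be the resolution graph of a rational surface singularity. This is essentially a reformulation of Laufer's criterion: running Laufer's computation sequence (\cite{Laufer}) from Artin's fundamental cycle, rationality is equivalent to every auxiliary intersection number along the sequence being at most $1$, equivalently to the arithmetic genus of the fundamental cycle being zero, and N\'emethi's analysis in \cite{Nem1} identifies exactly this with the triviality of all the graded roots together with the vanishing of $\mathbb{H}^{\geq 1}$. One must be slightly careful to run the argument simultaneously over all $Spin^c$ structures rather than only the canonical one; the contrapositive is perhaps cleanest, since a positive arithmetic genus somewhere along the sequence produces a local maximum of the weight function, hence a nonzero class in $\mathbb{H}_{red}$. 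Since every rational graph is \emph{almost rational} in N\'emethi's sense, we may assume henceforth that $\Gamma$ is almost rational with $\mathbb{H}(\Gamma,\mathfrak t)=\mathbb{H}^0(\Gamma,\mathfrak t)=\mathcal{T}^+$ for every $\mathfrak t$.

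It then remains to invoke the comparison isomorphism: for an almost rational graph one has a graded $\mathbb{Z}[U]$-module isomorphism $HF^+(\Gamma,\mathfrak t)\cong \mathbb{H}^0(\Gamma,\mathfrak t)$ for all $\mathfrak t$ (up to the usual orientation and grading conventions, which do not affect the $L$-space property), whence $HF^+(\Gamma,\mathfrak t)\cong \mathcal{T}^+$ and $HF^+_{red}(\Gamma,\mathfrak t)=0$ for all $\mathfrak t$; that is, $\Gamma$ is a Heegaard Floer $L$-space. The main obstacle is precisely this last input, which is the substantial content of \cite{Nem2}: it rests on Ozsv\'ath-Szab\'o's computation of $HF^+$ for negative-definite plumbed three-manifolds with few bad vertices, together with N\'emethi's reduction of the general almost rational case to that one via graded roots and surgery exact triangles. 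Alternatively one could aim only at the weaker conclusion that the $L$-space property is inherited, by inducting on the plumbing graph and tracking $HF^+_{red}$ through the Ozsv\'ath-Szab\'o surgery triangle under blow-downs and edge contractions, but this essentially reproves the relevant portion of that machinery.
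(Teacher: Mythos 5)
The paper gives no proof of this statement at all---it is quoted directly from \cite{Nem2}---and your sketch (lattice cohomology $L$-space $\Leftrightarrow$ rationality via the Artin--Laufer criterion, rational $\Rightarrow$ almost rational, then N\'emethi's comparison isomorphism $HF^+(\Gamma,\mathfrak t)\cong\mathbb{H}^0(\Gamma,\mathfrak t)$ for almost rational graphs) is a correct reconstruction of exactly the argument that citation stands in for. As you acknowledge, the substantive input is the comparison theorem of \cite{Nem2} itself, which you invoke rather than prove; that is precisely what the paper does too, so there is no gap relative to the paper's own treatment.
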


 The great advantage of lattice cohomology over Heegaard Floer homology is its computability. Unlike the latter theory, whose calculation involves essentially solving a PDE, the former can be computed combinatorially. In particular, there is a very simple criterion for characterising lattice cohomology $L$-spaces, given by Laufer's algorithm, which we now describe: associated to any plumbing we have a lattice, that is, a symmetric bilinear form

$$ \langle \;,\; \rangle: \mathbb{Z}^N \otimes \mathbb{Z}^N\longrightarrow \mathbb{Z}
$$

\noindent which is given, in some basis represented by the individual vertices, $\{E_i\}_{i=1}^N$, by the matrix $\Gamma$. For each vertex $v\in \Gamma$, let $E_v$ be the associated basis element. Define the canonical vector $K:=\sum_{v\in \Gamma} {k_v}E_v$, where $k_v=-\langle E_v , E_v \rangle-2$. Then, there is a weight function $\chi: \mathbb{Z}^N \longrightarrow \mathbb{Z}$ given by

$$\chi(l)=-\frac{\langle l+K,l \rangle}{2}
$$

\noindent The algorithm is then given by the following:

\begin{enumerate}
\item $z_0:=\sum_{v\in \Gamma} E_v $
\item If, at step $i$, $\exists$ $v(i)\in \Gamma$ such that $\langle z_i, E_{v(i)} \rangle>0$, define $z_{i+1}:=z_i+ E_{v(i)}$. If there is more than one $v(i)$ satisfying this condition, choose one arbitrarily.
\item If, at step $t$, $\langle z_t ,E_v \rangle\leq 0$ for all vertices $v\in \Gamma$, define $z_{min}:=z_t$. The procedure terminates here.
\end{enumerate}

\noindent It was shown by Laufer \cite{Laufer} that the above procedure always terminates (provided $\Gamma$ is negative-definite), and that the output $z_{min}$ is independent of the choice of path in the algorithm. In the singularity theory literature $z_{min}$ is known as Artin's minimal cycle, and it plays an important role in the algebro-geometric study of normal surface singularities. For the purposes of this article, though, we will content ourselves with the topological significance of $z_{min}$, which is due to N\'emethi \cite{Nem2}:

\begin{thm}\label{nemi} $\Gamma$ is a lattice cohomology $L$-space if and only if $\chi(z_{min})=1$.
\end{thm}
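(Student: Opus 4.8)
The plan is to prove Theorem \ref{nemi} (that is, the final statement of the excerpt) in two halves, treating the ``if'' and ``only if'' directions separately, and in both cases reducing everything to the behaviour of the weight function $\chi$ along the path produced by Laufer's algorithm. First I would record two elementary facts about $\chi$ that will be used throughout. For the step $z_{i+1}=z_i+E_{v(i)}$, a direct computation gives
\[
\chi(z_{i+1})-\chi(z_i) = -\frac{\langle E_{v(i)},E_{v(i)}\rangle + \langle 2z_i+K-E_{v(i)}+E_{v(i)},E_{v(i)}\rangle}{2}\cdot\tfrac12\ \text{(schematically)},
\]
more precisely $\chi(z_i+E_{v(i)})-\chi(z_i) = -\langle z_i,E_{v(i)}\rangle - \tfrac12\langle E_{v(i)}+K,E_{v(i)}\rangle = 1 - \langle z_i,E_{v(i)}\rangle$, using $k_v = -\langle E_v,E_v\rangle - 2$. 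Since the algorithm only performs the step when $\langle z_i,E_{v(i)}\rangle\ge 1$, this shows $\chi$ is non-increasing along the algorithm path, and strictly decreasing precisely when $\langle z_i,E_{v(i)}\rangle\ge 2$. Also $\chi(z_0)=\chi(\sum_v E_v)=1$ exactly when $-\langle z_0+K,z_0\rangle = 2$; in general $\chi(z_0)\le 1$ is NOT automatic, but $\chi(z_{min})\le\chi(z_0)$ and one checks $\chi(z_0)=1$ in the minimal/negative-definite normalization after blowdowns — this normalization bookkeeping is something I would state carefully at the outset, citing that $\chi(z_0)$ depends only on $\Gamma$ and equals $1$ for the graphs under consideration, or else argue directly that $\chi(z_{min})\ge 1$ always and $\le 1$ iff the path can be taken with all increments equal to $1$.

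For the ``only if'' direction, I would argue the contrapositive: if $\chi(z_{min})\le 0$, then $\mathbb{H}_{red}\ne 0$. Here I would invoke N\'emethi's structural description of lattice cohomology from \cite{Nem2}: $\mathbb{H}^0(\Gamma,\mathfrak t)$ is built from the sublevel sets $\{x : \chi(x)\le n\}$ of $\chi$ on the lattice (restricted to the relevant $\mathit{Spin}^c$ class), and the reduced part $\mathbb{H}^0_{red}$ vanishes iff every such nonempty sublevel set is connected and the minimum of $\chi$ equals the ``expected'' value, which in the canonical $\mathit{Spin}^c$ structure is $1$ (equivalently $0$ for $HF^+$ grading reasons, but on the $z_{min}$ side it is $\chi(z_{min})=1$). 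Since Laufer's algorithm computes $\min\chi$ over the natural cone and $z_{min}$ realizes it, $\chi(z_{min})<1$ forces the minimum to be too low, producing a nonzero graded piece of $\mathbb{H}^0_{red}$; conversely $\chi(z_{min})>1$ is impossible by the $\chi(z_0)=1$ remark together with monotonicity (the path from $z_0$ to $z_{min}$ can only decrease $\chi$). So the only possibilities are $\chi(z_{min})=1$ or $\chi(z_{min})\le 0$, and the latter is equivalent to $\mathbb{H}_{red}\ne 0$.

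For the ``if'' direction, suppose $\chi(z_{min})=1$. By the monotonicity computation, every step of the algorithm must have had increment exactly $1$, i.e. $\langle z_i,E_{v(i)}\rangle = 1$ at each step; this already constrains the path strongly. I would then show that this forces each sublevel set $\{x:\chi(x)\le n\}\cap(\text{cone})$ to be connected for all $n$: the point is that $z_{min}$ is the unique local minimum of $\chi$ on the cone and any lattice point can be connected to it by a $\chi$-non-increasing lattice path (run a Laufer-type descent from an arbitrary starting point, using negative-definiteness to guarantee termination, as in Laufer's original argument). Connectedness of all sublevel sets plus $\min\chi = 1$ is exactly N\'emethi's criterion for $\mathbb{H}^0_{red}(\Gamma,\mathfrak t_{can})=0$; for the other $\mathit{Spin}^c$ structures one runs the shifted version of the same argument, noting that the hypothesis $\chi(z_{min})=1$ in the canonical structure propagates (this is part of what Laufer's algorithm packages, and is where I would lean most heavily on \cite{Nem2}). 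Since $\Gamma$ is a rational homology sphere plumbed along a negative-definite tree, $\mathbb{H}^i=0$ for $i\ge 1$ automatically (the lattice is at most ``one-dimensional'' in the relevant sense for trees with the given vertex structure — or more honestly, $\mathbb{H}^{i}_{red}$ for $i\ge1$ is controlled by the same sublevel-set topology), so $\mathbb{H}_{red}=0$ and $\Gamma$ is a lattice cohomology $L$-space.

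The main obstacle is the second half: converting ``all Laufer increments equal $1$'' into the global statement that \emph{every} sublevel set of $\chi$ (in every $\mathit{Spin}^c$ class) is connected. The subtlety is that a path could a priori leave and re-enter a sublevel set, or that two lattice points at height $n$ might only be joinable through height $n+1$; ruling this out requires the full force of Laufer's reduction argument combined with N\'emethi's identification of $z_{min}$ as the global minimizer, and care with the non-canonical $\mathit{Spin}^c$ structures, where the relevant ``starting vector'' is a shift of $z_0$ and one must re-verify that the analogue of $\chi = 1$ at the minimum holds. I would isolate this as a lemma (``$\chi(z_{min})=1$ $\iff$ all sublevel sets connected and $\min\chi=1$'') and prove it by the descent argument, then quote \cite{Nem2} for the translation into $\mathbb{H}_{red}=0$.
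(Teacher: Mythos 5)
This theorem is not proved in the paper at all: it is quoted from N\'emethi \cite{Nem2} (in essence it is Artin's rationality criterion $\chi(z_{min})=1$ combined with N\'emethi's theorem that rational graphs are exactly the negative-definite plumbing trees with vanishing reduced lattice cohomology), so there is no in-paper argument to compare yours against. Judged on its own terms, your outline correctly identifies the elementary part of the story: the increment formula $\chi(z_i+E_{v(i)})-\chi(z_i)=1-\langle z_i,E_{v(i)}\rangle$, hence monotonicity of $\chi$ along Laufer's path, and $\chi(z_0)=1$. Incidentally, you need not hedge about the latter: $\langle z_0+K,z_0\rangle=\sum_v\bigl(n(v)-2\bigr)=-2$ for any tree, so $\chi(z_0)=1$ always and therefore $\chi(z_{min})\le 1$ always. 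This disposes of the trivial inequality but of neither implication.

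The genuine gaps sit exactly where the content lives. For the ``if'' direction you must pass from the single numerical condition $\chi(z_{min})=1$, which concerns one cycle in the canonical $Spin^c$ structure, to connectedness of every sublevel set of $\chi$ in \emph{every} $Spin^c$ structure together with $\mathbb{H}^{\ge 1}=0$. Your proposed descent lemma (``every lattice point connects to $z_{min}$ by a $\chi$-non-increasing path'') is precisely the property that fails for non-rational graphs and is only recoverable from the structure theory of rational graphs (rationality is inherited by subgraphs and by the shifted cycles governing the non-canonical $Spin^c$ structures); as written, you are quoting the conclusion you are trying to prove. Moreover, the claim that $\mathbb{H}^{i}=0$ for $i\ge 1$ ``automatically'' for trees is false: negative-definite plumbing trees with several bad vertices can have nonvanishing higher lattice cohomology, and its vanishing here is again a consequence of rationality, not of tree-ness. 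In the ``only if'' direction, the assertion that $\chi(z_{min})\le 0$ ``forces the minimum to be too low, producing a nonzero graded piece of $\mathbb{H}^0_{red}$'' is not an argument: since $\chi(0)=0$, a low value of $\chi$ does not by itself disconnect any sublevel set, and one must actually exhibit a disconnected $S_n$ or compute a rank of $\mathbb{H}^0_{red}$ via N\'emethi's formulas. Both halves therefore reduce to citing \cite{Nem2} --- which is exactly what the paper does, and is the reasonable thing to do --- but your write-up presents those citations as if they were proofs.
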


\noindent It is easy to show that $\chi(z_0)=1$ and that $\chi(z_i)\geq\chi(z_{i+1})$, so that if we notice at any step of the algorithm that $\chi$ has dropped, then we may stop there and conclude that $\Gamma$ is not a lattice cohomology $L$-space.

\vskip 0.1in For the reader's convenience, a more conceptual and equivalent way to think about Laufer's algorithm together with Theorem \ref{nemi}, which is essentially just unpacking the function $\chi$ in terms of the adjacency matrix, is informally described  in the following: 

\begin{enumerate}
\item For each vertex $v\in\Gamma$, compute the deficiency $d(v)$.
\item For each vertex $v$ such that $d(v)\geq1$, update the deficiency by adding $w(v)$ to $d(v)$. Moreover, increase the deficiency of each of the neighbours of $v$ by $1$.
\item Repeat the above two steps (each time computing the updated deficiency) until either a) a vertex of deficiency $\geq 2$ appears or b) all vertices have nonpositive deficiency.
\item In case a) above, $\Gamma$ is not a lattice cohomology $L$-space. In case b) above, $\Gamma$ is a lattice cohomology $L$-space.
\end{enumerate}

Despite its apparent simplicity, Laufer's algorithm can showcase some very subtle behaviour. The reader is challenged to test it on the next example, lest the problem seem too straightforward.

\begin{example} Recall the negative-definite $E_8$ graph, which we present below, together with a labelling of the vertices. This graph represents the Poincar\'e homology sphere.

\begin{figure}[h!]
\centering \label{steps}
\includegraphics[scale=0.30]{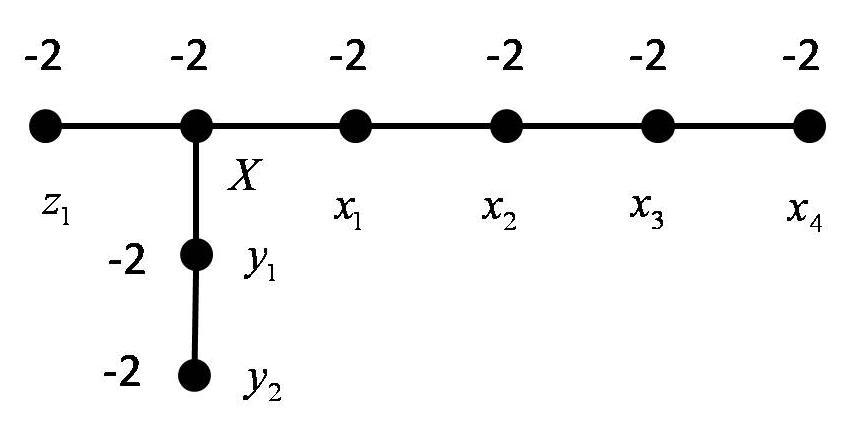}
\caption{The negative-definite $E_8$ graph.}
\end{figure}

\noindent After a staggering 21 iterations, we obtain $z_{min}=6X+5x_1+4x_2+3x_3+2x_4+3z_1+4y_1+2y_2$, and $\chi(z_{min})=1$, confirming that the Poincar\'e homology sphere is a lattice cohomology $L$-space, as expected.

\end{example}

As a start to our investigation of the left-orderability conjecture, we prove the next result:

\begin{prop}\label{lattice} We have the following:

\begin{enumerate}
\item If $\Gamma$ has a very bad vertex, it is not a lattice cohomology $L$-space.

\item If $\Gamma$ contains a proper $E_8$ subgraph, it is not a lattice cohomology $L$-space.

\item If $\Gamma$ is insulated, it is a lattice cohomology $L$-space.
\end{enumerate}
\end{prop}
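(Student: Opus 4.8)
The plan is to prove the three statements of Proposition~\ref{lattice} by analysing Laufer's algorithm directly, using the ``deficiency bookkeeping'' reformulation given just above the proposition. Recall that $\chi(z_0)=1$, that $\chi$ is non-increasing along the algorithm, and that by Theorem~\ref{nemi} the manifold $\Gamma$ is a lattice cohomology $L$-space if and only if $\chi(z_{min})=1$; in the bookkeeping language, $\chi$ drops precisely when some vertex acquires (updated) deficiency $\geq 2$. So for parts (1) and (2) I want to exhibit a sequence of legal moves that forces a vertex of deficiency $\geq 2$ to appear, while for part (3) I want to show that no matter how one runs the algorithm, every vertex stays at deficiency $\leq 0$ from the start, so that $z_0$ is already $z_{min}$ and $\chi(z_{min})=\chi(z_0)=1$.

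For part (1): if $v$ is very bad then $d(v)=n(v)-|w(v)|\geq 2$ already at step~1 (that is, at $z_0$, since $\langle z_0,E_v\rangle = n(v)-|w(v)| = d(v)$ for a negative-definite graph, where $w(v)=e_v<0$). Hence case (a) of the bookkeeping procedure is met immediately, or equivalently $\chi(z_1)<\chi(z_0)=1$, so $\Gamma$ is not a lattice cohomology $L$-space. This is essentially immediate. For part (2): here $\Gamma$ contains a proper negative-definite $E_8$ subgraph, meaning $E_8$ sits inside $\Gamma$ but is not all of $\Gamma$, so at least one vertex of the $E_8$ piece has an extra neighbour in $\Gamma\setminus E_8$, raising its deficiency by at least $1$ relative to its deficiency inside $E_8$. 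The idea is to mimic the $21$-step run of Laufer's algorithm on $E_8$ referenced in the Example: on the standalone $E_8$ graph the algorithm terminates with $\chi(z_{min})=1$, and a vertex of $E_8$ reaches updated deficiency exactly $1$ at some point of that run (this is what makes the $E_8$ computation ``critical''). Running the same moves inside $\Gamma$ — legal because the relevant inner products only depend on the $E_8$ part until we touch the boundary vertex — the boundary vertex, which started one higher, now reaches deficiency $\geq 2$, so $\chi$ drops and $\Gamma$ is not a lattice cohomology $L$-space. Part of the care needed here is to confirm that some vertex of $E_8$ genuinely hits deficiency $1$ during (a suitable choice of path in) the algorithm on $E_8$ itself; one can also phrase this more robustly via the known fact that $\chi(z_{min})=1$ for $E_8$ forces $z_{min}\neq z_0$, hence at least one move occurs, hence some vertex momentarily had positive deficiency, and track which one meets the $\Gamma$-boundary.

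For part (3): assume $\Gamma$ is insulated. I will argue that $z_0$ is already terminal, i.e.\ $\langle z_0, E_v\rangle \leq 0$ for every vertex $v$, which is exactly the condition $d(v)\leq 0$ for all $v$ at the initial step — but this is false in general, since bad vertices have $d(v)\geq 1$. So instead I track the algorithm more carefully: start by applying moves only at bad vertices. When a bad vertex $v$ (with $d(v)\geq 1$, $d(v)\leq 1$ since no very bad vertices) is processed, its own deficiency becomes $d(v)+w(v)$, which is $\leq 0$ since $w(v)<0$, and is in fact $\leq -1$; moreover by condition~2 of insulation its neighbours are all good, and each good neighbour $u$ has its deficiency raised by at most the number of its bad neighbours, i.e.\ by at most $K(u)$. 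After processing all bad vertices (condition~2 ensures these moves don't interfere, since bad vertices are pairwise non-adjacent so processing one doesn't re-inflate another past $0$), each good vertex $u$ has updated deficiency at most $d(u)+K(u)\leq 0$ by condition~1, and each (former) bad vertex now has negative deficiency. The subtlety is that once a good vertex's deficiency has been raised it might need to be re-examined — but condition~1 is exactly the budget that prevents it ever exceeding $0$, so no new moves are ever triggered; thus the algorithm terminates at this configuration with $\chi$ never having dropped, giving $\chi(z_{min})=1$. The main obstacle I anticipate is making the combinatorial bookkeeping in part (3) fully rigorous — in particular checking that the order in which bad vertices are processed does not matter (guaranteed by non-adjacency and path-independence of Laufer's algorithm) and that a good vertex's deficiency, after absorbing $+1$ from each of its $K(u)$ bad neighbours, genuinely cannot be pushed above $0$ by any later move, which requires noting that no further moves are available once all deficiencies are $\leq 0$. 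Parts (1) and (2) are comparatively routine, with (2) requiring only the observation that the $E_8$ run passes through deficiency $1$.
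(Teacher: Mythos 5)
Your parts (1) and (3) coincide with the paper's proof: (1) is the same one-line observation that $\langle z_0,E_v\rangle=d(v)\geq 2$ forces $\chi$ to drop, and (3) is exactly the paper's ``one iteration per bad vertex'' argument, which you spell out in more detail than the paper does (your checks that non-adjacency of bad vertices prevents re-inflation, and that $d(v)+w(v)\leq -1$ uses minimality to rule out weight $-1$ at bad vertices, are both correct and worth having).

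Part (2) is where you genuinely diverge. The paper first disposes of abutments at non-leaf vertices of the $E_8$ (these create either a very bad vertex or two bad vertices joined by a $(-2)$-bamboo), then invokes the subgraph monotonicity $\chi(z_{min})\leq\chi(z'_{min})$ to reduce to $E_8$ plus a single vertex hanging off a leaf, and finally runs Laufer's algorithm explicitly in the three resulting cases ($w$ off $y_2$, $z_1$, $x_4$), exhibiting $\chi<1$ at steps $10$, $4$ and $20$ respectively. Your comparison argument --- replay the standalone $E_8$ run inside $\Gamma$, noting that pairings at interior $E_8$ vertices are unchanged and at the boundary vertex only increase, so the same moves remain legal and the boundary vertex's deficiency is inflated by at least one --- is a softer route that avoids all three computations. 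However, the step you yourself flag is the actual crux, and your ``more robust'' fallback does not close it: knowing that $z_{min}(E_8)\neq z_0$ only tells you that \emph{some} vertex (namely $X$, the unique bad vertex at the start) attains positive deficiency, not that the particular vertex carrying the extra neighbour ever does; ``track which one meets the $\Gamma$-boundary'' is not an argument. The fact you actually need is that \emph{every} coefficient of $z_{min}(E_8)=6X+5x_1+4x_2+3x_3+2x_4+3z_1+4y_1+2y_2$ is at least $2$, so every vertex of $E_8$ is incremented at least once during the standalone run and hence attains deficiency $\geq 1$ at some step; transplanted into $\Gamma$, the vertex with the extra neighbour then shows deficiency $\geq 2$ at that step, and $\chi(z_{min})<1$ follows from path-independence and monotonicity of $\chi$. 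With that one fact (readable from the paper's Example, though it still rests on the $21$-step computation) your argument is complete, and it is arguably cleaner and more uniform than the paper's case analysis, since it handles an abutment at any vertex of the $E_8$ in one stroke.
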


\begin{proof} Statement \textit{1} is immediate. To see statement \textit{2}, first note that if there are two bad vertices either as neighbours or connected by a bamboo chain with weights $-2$, then $\Gamma$ is not a lattice cohomology $L$-space. Moreover, it is a standard fact that if we have $\Gamma'\subset \Gamma$, with minimal cycles $z'_{min}$ and $z_{min}$ respectively, then $\chi(z_{min})\leq \chi(z'_{min})$. Therefore, it suffices to consider the case where there is a single vertex abutting from one of the leaves of the $E_8$ graph. We will use the labelling of the vertices displayed in the figure above. If there is a vertex $w$ abutting from $y_2$, after 10 steps we see $z_{10}=3X+3x_1+2x_2+2x_3+x_4+2z_1+3y_1+2y_2+w$ (this is not the minimal cycle), and it can be computed that $\chi(z_{10})<1$. So we may stop here. If there is a vertex $w$ abutting from $z_1$, then we may stop at $z_{4}=2X+2x_1+x_2+x_3+x_4+2z_1+w+2y_1+y_2$, where we see $\chi(z_{4})<1$. Finally, if there is $w$ abutting from $x_4$, we terminate at $z_{20}=5X+5x_1+4_2+3x_3+2x_4+w+3z_1+4y_1+3y_2$, also with $\chi(z_{20})<1$, thereby finishing the proof of statement \textit{2}. Statement \textit{3} is immediate from the ``informal'' description of the algorithm presented above: we do one iteration for each bad vertex, and since each good vertex satisfies  $d(v)+K(v)\leq 0$, we do not need to perform any further iterations.
\end{proof}

\begin{remark} The reader is invited to compare the computation in the third subcase in the proof of statement \textit{2} with the one for the $E_8$ graph to appreciate the subtlety in Laufer's algorithm.
\end{remark}

Theorem \ref{isoiso} immediately implies the next result:

\begin{cor}\label{lati} If $\Gamma$ is insulated, it is a Heegaard Floer $L$-space.
\end{cor}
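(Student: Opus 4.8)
The plan is to obtain this statement as an immediate consequence of the two results just established, namely Proposition \ref{lattice}(3) and Theorem \ref{isoiso}. The logical chain is short: $\Gamma$ insulated $\Rightarrow$ $\Gamma$ is a lattice cohomology $L$-space $\Rightarrow$ $\Gamma$ is a Heegaard Floer $L$-space.

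First I would invoke Proposition \ref{lattice}(3), which asserts precisely that an insulated graph $\Gamma$ is a lattice cohomology $L$-space. Before doing so it is worth checking that the ambient hypotheses needed to even speak of lattice cohomology $L$-spaces and Heegaard Floer $L$-spaces are in force. By Definition \ref{insu} an insulated $\Gamma$ is a negative-definite plumbing tree; in particular its intersection matrix is nonsingular, and since we are in the setting where each vertex carries a single integer weight (all genera equal to zero), the plumbed manifold $\Gamma$ is a rational homology sphere. Hence the splitting $\mathbb{H}(\Gamma,\mathfrak{t}) = \mathcal{T}^+ \oplus \mathbb{H}_{red}(\Gamma,\mathfrak{t})$ is available, so the notion of lattice cohomology $L$-space makes sense, as does that of Heegaard Floer $L$-space.

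Second I would apply Theorem \ref{isoiso} (due to N\'emethi), which states that a lattice cohomology $L$-space is a Heegaard Floer $L$-space. Composing the two implications yields the claim. There is essentially no obstacle here: both inputs are already in hand, and the only point requiring a (routine) remark is that an insulated graph does indeed produce a rational homology sphere, which is immediate from the negative-definiteness built into Definition \ref{insu} together with the standard fact that a negative-definite tree plumbing with vanishing genera is a rational homology sphere.
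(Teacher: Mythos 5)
Your proof is correct and matches the paper exactly: the corollary is stated there as an immediate consequence of Proposition \ref{lattice}(3) together with Theorem \ref{isoiso}. The extra remark verifying that an insulated $\Gamma$ is a rational homology sphere is a harmless (and reasonable) bit of due diligence.
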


\begin{remark} Since we will be showing theorems A and B through taut foliations, statements \textit{1} and \textit{2} are somehow not needed in hindsight. However, we decided to include them to emphasise that lattice cohomology was the dynamo of our thought process, and that we certainly wouldn't have discovered our results without it.
\end{remark}

\subsection{Left-orderable $3$-manifold groups and taut foliations}

In this subsection, we discuss some known facts about left-orderable groups and taut foliations. Recall the definition of a left-orderable group:

\begin{defi}Let $G$ be a nontrivial group. We say that $G$ is left-orderable if there exists a strict total ordering $>$ on $G$ with the property that, given any $g,h\in G$ such that $g>h$, then $fg>fh$, for all $f\in G$.
\end{defi}

\noindent The study of orderable $3$-manifold groups was undertaken systematically in a paper of Boyer-Rolfsen-Wiest \cite{BRW}. In it, the following characterisation is proved:

\begin{thm} Let $M$ be a compact, irreducible, $P^2$-irreducible $3$-manifold. Then, $\pi_1(M)$ is left-orderable if only if there exists a nontrivial homomorphism $\phi: \pi_1(M)\longrightarrow L$, where $L$ is any left-orderable group.
\end{thm}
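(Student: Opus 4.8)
The plan is to prove the two implications separately, the forward direction being essentially trivial and the reverse direction being where all the work lies. If $\pi_1(M)$ is left-orderable, then take $L = \pi_1(M)$ and $\phi = \Id$; this is a nontrivial homomorphism to a left-orderable group, so that direction is immediate. For the converse, suppose we are given a nontrivial homomorphism $\phi\colon \pi_1(M)\to L$ with $L$ left-orderable. I would first reduce to the case where the image $\phi(\pi_1(M))$ is itself the left-orderable group in question (a subgroup of a left-orderable group is left-orderable, or trivial — but it is nontrivial by hypothesis), so without loss of generality $\phi$ is surjective. Let $N = \ker\phi \lhd \pi_1(M)$, a \emph{proper} normal subgroup, and consider the cover $\widetilde M \to M$ corresponding to $N$, so that $\pi_1(\widetilde M) \cong N$ and the deck group is $\pi_1(M)/N \cong L$.

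The key tool is the following standard fact about $3$-manifold groups: if $M$ is compact, irreducible and $P^2$-irreducible, then $\pi_1(M)$ is left-orderable if and only if it admits a surjection onto a nontrivial left-orderable group whose kernel is left-orderable (or trivial). More concretely, I would invoke the structural result — due in this form to the work growing out of Boyer–Rolfsen–Wiest, and ultimately resting on the theorem that the fundamental group of a compact orientable irreducible $3$-manifold with positive first Betti number is left-orderable, together with the fact that such groups are locally indicable when they have infinite abelianization — to handle the kernel $N = \pi_1(\widetilde M)$. There are two cases. If $H_1(\widetilde M;\Q) \neq 0$, then $\pi_1(\widetilde M)$ surjects onto $\Z$, hence onto a left-orderable group, and since $\widetilde M$ is again compact (or one passes to a compact core), irreducible and $P^2$-irreducible, an inductive/structural argument shows $N$ is left-orderable. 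If instead $H_1(\widetilde M;\Q) = 0$ then $\widetilde M$ is a rational homology sphere cover, and one uses the finiteness/Euler-characteristic constraints: a proper finite-index situation would force $L$ finite, contradicting that nontrivial finite groups are not left-orderable (they have torsion), while an infinite-index cover forces $H_1(\widetilde M;\Q)\neq 0$ by a standard half-lives-half-dies or transfer argument, reducing to the previous case.

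Finally, once we know both $N$ and $L \cong \pi_1(M)/N$ are left-orderable, we assemble a left-ordering on $\pi_1(M)$ itself: if $1 \to N \to G \to Q \to 1$ is a short exact sequence of groups with $N$ and $Q$ left-orderable, then $G$ is left-orderable. One orders $G$ by declaring $g > 1$ if either $\phi(g) > 1$ in $Q$, or $\phi(g) = 1$ and $g > 1$ in the ordering on $N$; a routine check confirms this is a strict total ordering invariant under left multiplication (left-invariance in the first clause uses left-invariance on $Q$, and in the second clause uses that $g\in N$ together with left-invariance on $N$). This completes the reverse implication.

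The main obstacle is the middle step: extracting left-orderability of the kernel $N = \pi_1(\widetilde M)$ from the topology of the cover $\widetilde M$. The delicate point is that $\widetilde M$ may be noncompact and that one must control its first homology; this is exactly where the hypotheses that $M$ is irreducible and $P^2$-irreducible are used, to guarantee that $\widetilde M$ (or its compact core) inherits these properties and that the homological dichotomy above is exhaustive. The short-exact-sequence assembly and the forward direction are routine.
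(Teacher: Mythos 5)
The paper does not actually prove this statement; it quotes it from Boyer--Rolfsen--Wiest \cite{BRW}. Measured against the argument given there, your proposal has a genuine gap in the reverse direction. Your plan is to left-order $N=\ker\phi$ and then invoke the (correct) fact that an extension of a left-orderable group by a left-orderable group is left-orderable. But ordering $N$ is essentially as hard as the original problem, and the two cases you sketch do not close it: the cover $\widetilde M$ corresponding to $N$ is in general noncompact with a fundamental group that need not be finitely generated, so Scott's compact core theorem and the half-lives-half-dies inequality --- which require compactness, respectively finite generation --- do not apply to it. Even in the favourable case $H_1(\widetilde M;\Q)\neq 0$, a surjection $N\to\Z$ only puts $N$ in the hypothesis of the very theorem you are proving, for a manifold to which that theorem's hypotheses do not apply; the ``inductive/structural argument'' you defer to is therefore circular as stated and has no terminating parameter.

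The actual proof avoids ordering the kernel altogether by using the Burns--Hale criterion: a group $G$ is left-orderable if and only if every nontrivial \emph{finitely generated} subgroup $H\leq G$ admits a nontrivial homomorphism onto a left-orderable group. Given a nontrivial $\phi\colon\pi_1(M)\to L$, one verifies this for each such $H$: if $\phi|_H$ is nontrivial, done; if $H\subseteq\ker\phi$, then $H$ has infinite index (a nontrivial left-orderable group is torsion-free, hence infinite, so $\ker\phi$ has infinite index), and \emph{now} $H$ is finitely generated by assumption, so the corresponding noncompact cover has a compact $P^2$-irreducible Scott core with nonempty, non-spherical boundary, which by half-lives-half-dies has $b_1>0$; hence $H$ surjects onto $\Z$. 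Reducing to finitely generated subgroups before passing to covers is exactly the step your proposal is missing; the forward direction and your extension lemma are fine but are not where the content lies.
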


\noindent We remind the reader that we declare the trivial group to be non-left-orderable. An immediate consequence of the above result is that a compact, irreducible, $P^2$-irreducible $3$-manifold $M$ with $b_1(M)>0$ has left-orderable fundamental group (take the Hurewicz homomorphism $Hz: \pi_1(M)\longrightarrow H_1(M;\mathbb{Z})$, then project onto $\mathbb{Z}$, and notice that the latter group is left-orderable via the standard ordering as a subset on $\mathbb{R}$).

\vskip 0.1in
As mentioned in the introduction, taut foliations are important in the study of Conjecture \ref{LO}. Recall that a codimension-one foliation $\mathcal{F}$ of $M$ is said to be $\mathbb{R}$-covered if the leaf space of the induced foliation $\widetilde{\mathcal{F}}$ on the universal cover $\widetilde{M}$ is homeomorphic to the real line. If an $\mathbb{R}$-covered foliation is also co-orientable, then the action of $\pi_1(M)$ as deck transformations gives us a representation into the group of orientation-preserving homeomorphisms of the real line, $\rho: \pi_1(M)\longrightarrow Homeo^+(\mathbb{R})$. It is well-known that $Homeo^+(\mathbb{R})$ is left-orderable. Thus, if one can show that $\rho$ is nontrivial, it follows that $\pi_1(M)$ is left-orderable. It turns out that very often it is possible to upgrade a taut foliation to a co-orientable $\mathbb{R}$-covered foliation. One particular success story happens in the realm of Seifert manifolds: a codimension-one foliation of a Seifert manifold is said to be horizontal if it's everywhere transverse to the Seifert fibration. These foliations are very well understood, and are known to be $\mathbb{R}$-coverd. In fact, Conjecture \ref{LO} is solved affirmatively for Seifert-fibered  manifolds \cite{BGW} in the strong sense that the statement about taut foliations is also true. Now, it is also well-known that every Seifert fibered rational homology sphere can be given an orientation for which it can be represented as a negative-definite plumbing $\Gamma$. Moreover, it is also known in this case that $\mathbb{H}(\Gamma,\mathfrak{t})\simeq HF^+(\Gamma,\mathfrak{t})$ \cite{Nem2}, for all $\mathfrak{t}\in Spin^c(\Gamma)$. Since we will make heavy use of these results together, we summarise them in a single statement:

\begin{thm}\label{stipi} Suppose that $\Gamma$ is a Seifert-fibered rational homology sphere. Then, $\Gamma$ is not a lattice cohomology $L$-space $\Leftrightarrow$ $\Gamma$ is not a Heegaard Floer homology $L$-space $\Leftrightarrow$ $\pi_1(\Gamma)$ is left-orderable $\Leftrightarrow$ $\Gamma$ admits a horizontal foliation.
\end{thm}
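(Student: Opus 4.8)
\textbf{Proof proposal for Theorem \ref{stipi}.}

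The plan is to assemble this four-way equivalence from results that are essentially already in the literature, rather than to prove anything from scratch; the statement is a convenient repackaging. First I would establish the chain ``not a lattice cohomology $L$-space $\Leftrightarrow$ not a Heegaard Floer $L$-space''. The forward direction is immediate from Theorem \ref{isoiso} (contrapositive): if $\Gamma$ is not a Heegaard Floer $L$-space then it cannot be a lattice cohomology $L$-space. For the reverse direction I would invoke the cited result of N\'emethi \cite{Nem2} that for Seifert-fibered rational homology spheres one has the full isomorphism $\mathbb{H}(\Gamma,\mathfrak{t})\simeq HF^+(\Gamma,\mathfrak{t})$ for every $\mathfrak{t}$, so in particular $\mathbb{H}_{red}$ vanishes for all $\mathfrak{t}$ exactly when $HF^+_{red}$ does. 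This settles the first equivalence.

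Next I would bring in the Boyer-Gordon-Watson resolution of Conjecture \ref{LO} for Seifert-fibered spaces \cite{BGW}, which is precisely the statement that for such a manifold ``$\Gamma$ is a Heegaard Floer $L$-space $\Leftrightarrow$ $\pi_1(\Gamma)$ is not left-orderable''. Taking contrapositives gives ``not a Heegaard Floer $L$-space $\Leftrightarrow$ $\pi_1(\Gamma)$ is left-orderable'', which chains onto the previous equivalence. Here I would note the standing hypotheses are met: a rational homology sphere is irreducible once we discard the $S^1\times S^2$ case (which is not a rational homology sphere anyway), and negative-definite plumbed Seifert rational homology spheres are $P^2$-irreducible, so the Boyer-Rolfsen-Wiest framework applies and there is no issue about the trivial-group convention since $S^3$ is correctly on the non-left-orderable, $L$-space side.

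Finally I would close the loop with the horizontal foliation clause. The relevant input, again from \cite{BGW} (together with the classical work on horizontal foliations of Seifert spaces, e.g. Eisenbud-Hirsch-Neumann, Jankins-Neumann, Naimi), is that a Seifert-fibered rational homology sphere admits a horizontal foliation if and only if it is not a Heegaard Floer $L$-space; and horizontal foliations on Seifert spaces are taut, co-orientable (after passing to a suitable double cover / orientation if necessary, but in the rational homology sphere case the relevant representation is already nontrivial) and $\mathbb{R}$-covered, so their existence forces $\pi_1(\Gamma)$ to be left-orderable via the $\rho:\pi_1(\Gamma)\to\mathrm{Homeo}^+(\mathbb{R})$ argument recalled above, reproving one implication and confirming consistency. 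Thus ``$\Gamma$ admits a horizontal foliation'' slots in as a fourth equivalent condition.

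The only real subtlety — and the step I would flag as the main obstacle — is making sure the orientation conventions line up: Conjecture \ref{LO} and the $L$-space condition are orientation-sensitive in the sense that $HF^+_{red}$ of $\Gamma$ versus $-\Gamma$ differ, whereas left-orderability of $\pi_1$ and the existence of a taut (horizontal) foliation are orientation-insensitive. The resolution, which I would spell out, is that every Seifert-fibered rational homology sphere admits exactly one orientation realizing it as a negative-definite plumbing, lattice cohomology is defined for that orientation, and the Boyer-Gordon-Watson theorem is symmetric enough (an $L$-space for one orientation iff for the other, among Seifert spaces with the foliation characterization) that the equivalences are unambiguous. Modulo this bookkeeping, the proof is just a citation assembly and I would keep it short.
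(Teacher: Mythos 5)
Your citation assembly matches the paper's own justification exactly: the paper offers no independent proof of Theorem \ref{stipi}, presenting it explicitly as a summary of the Boyer--Gordon--Watson resolution of the conjecture for Seifert-fibered spaces (including the horizontal foliation clause) combined with N\'emethi's isomorphism $\mathbb{H}(\Gamma,\mathfrak{t})\simeq HF^+(\Gamma,\mathfrak{t})$ for negative-definite plumbed Seifert rational homology spheres. Your additional care about orientation conventions and the irreducibility hypotheses is sound bookkeeping that the paper leaves implicit, so the proposal is correct and takes the same route.
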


\noindent The concept of horizontal foliation can be extended to graph manifolds in a straightforward manner, as described in \cite{BB}:

\begin{defi} Let $M$ be a graph manifold (possibly with boundary), and let $\mathcal{F}$ be a codimension-one foliation. Let $J$ denote the set of JSJ tori, and let $M$ have as boundary a collection of tori $\partial M=(T_1,...,T_k)$. Suppose that $\mathcal{F}$ is transverse to the Seifert fibration of each JSJ component of $M$, and that $\mathcal{F}\cap \big ( J\cup \partial M\big )$ is a foliation by simple closed curves, in particular with slopes $(\alpha_1,...,\alpha_k)\subset \partial M$. Then, we say that $\mathcal{F}$ is a horizontal foliation detected by $(\alpha_1,...,\alpha_k)$. 
\end{defi}

\noindent Horizontal foliations are obviously taut. Notice that our definition is slightly more restrictive than the one given in \cite{BB} (there, a horizontal foliation is not required \textit{a priori} to be detected by simple closed curves on the boundary). The nice feature of horizontal foliations on graph manifolds is that we can glue them together piece by piece, thus they are very suited for inductive arguments (provided we know how to perform the gluing). We also remark that it is shown in \cite{Brit} that horizontal foliations are $\mathbb{R}$-covered, and with nontrivial action on $Homeo^+(\mathbb{R})$. Moreover, it is shown in Lemma 5.5 of \cite{BRW} that horizontal foliations on Seifert manifolds with orientable surface underlying the base orbifold are co-orientable. Since we will be dealing exclusively with such JSJ pieces (we use bundles over the sphere as the building blocks for $\Gamma$), it follows easily that our horizontal foliations will be co-orientable. Thus, if we establish the existence of a horizontal foliation on $\Gamma$, we get at once left-orderability and exclusion from the Heegaard Floer $L$-space family.

\section{Proofs}

\subsection{Theorems A and B}

We start by making some elementary but useful remarks on the diagonalisation of $\Gamma$. Notice that we work with matrices that have a very special form: if two vertices $v$, $w$, are connected by an edge, then, up to permutation of rows and columns, near the $v$, $w$ entries the matrix $\Gamma$ looks like

$$ \Gamma=\left( \begin{array}{cccc}
* & *&*& * \\
* &-e_v & 1&* \\
* & 1 &-e_w & *\\
*&*&*&* \\
 \end{array} \right) $$

\noindent In particular, if $w, v_1,...,v_n$ is a bamboo sequence starting at a node $w$ terminating at a leaf $v_n$, with respective weights $-f, -e_1,...,-e_n$, then we may write $\Gamma$ as

$$\Gamma= \left( \begin{array}{cccccc}
\Gamma'& *&0&&...&0\\
*&  -f& 1 & 0 &... &0 \\
 0&1 &-e_1 & 1 &... &0 \\
  &0& 1 & ... & 1&... \\
 ... &... &  & 1 &-e_{n-1} &1 \\
 0&0 & 0 & ... &1 & -e_n \end{array} \right)
$$

\noindent where $\Gamma'$ is some submatrix of $\Gamma$. It is now clear that we may apply elementary row operations to partially diagonalise $\Gamma$ into the form

$$\Gamma^*= \left( \begin{array}{cccccc}
\Gamma'& *&0&&...&0\\
*&  f-\frac{1}{ [-e_1,...,-e_n]}& 0 &   &... &0 \\
 0&0 & [-e_1,...,-e_n] & 0 &... &0 \\
  & & 0 & ... & 0&... \\
 ... &... &  & 0 & [-e_{n-1},-e_n] &0 \\
 0&0 & 0 & ... &0 & -e_n \end{array} \right)
$$

\noindent where $ [-e_j,...,-e_n]$ denotes the Hirzebruch-Jung continued fraction expansion

$$ [-e_j,...,-e_n]=-e_j-\frac{1}{-e_{j+1}-\frac{1}{...-\frac{1}{-e_n}}}$$

\noindent Using this technique, we can construct an algorithm for diagonalising $\Gamma$ around a choice of preferred vertex: 

\begin{enumerate}

\item Choose a vertex $v$ in $\Gamma$, and present $\Gamma$ as an ordered tree, rooted at $v$.   
\item Starting at each leaf, apply row operations as described above to partially diagonalise $\Gamma$ into the nodes nearest to the leaves.
\item Discard the diagonalised part of this matrix, and treat the remaining matrix as the intersection matrix of some tree $\widehat{\Gamma}$ (with rational coefficients at the leaves).
\item Repeat steps 2) and 3) until $v$ is reached. Put together all the discarded bits. The result will be a diagonal matrix $\widehat{\Gamma}^{v}$.
\end{enumerate}

\noindent From elementary linear algebra, we know that negative-definiteness is preserved at each step of the algorithm. Moreover, we remark that the result of the procedure depends heavily on the choice of root $v$.

\begin{defi} Let $\Delta_{v}$ be the diagonal entry corresponding to $v$ in the matrix $\widehat{\Gamma}^{v}$. Then, the quantity $1/\Delta_v$ is said to be the \textbf{de-rationaliser} of $v$.
\end{defi}

\begin{example} Suppose we are given the following plumbing tree, and we wish to diagonalise its intersection matrix keeping the cirlced vertex as a preferred one:

\begin{center}
\includegraphics[scale=0.3]{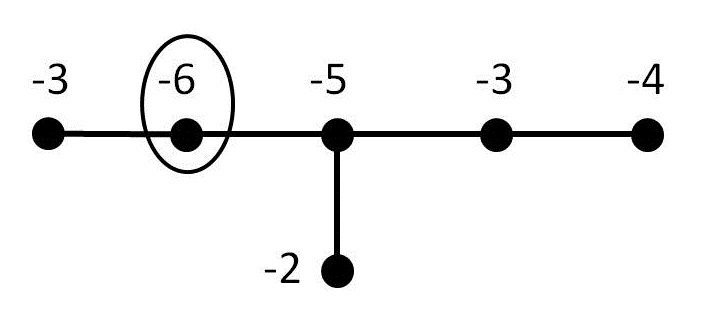}
\end{center}

The intersection matrix may be written as

$$ \Gamma=\left( \begin{array}{cccccc}
-3 & 1 & 0 &0  & 0 &0 \\
 1& -6& 0 & 1 &0 &0 \\
0 & 0 &-2 & 1 &0 &0 \\
  0&1& 1 & -5 & 1& 0 \\
0 & 0 & 0 & 1 &-3 &1 \\
0& 0 & 0 & 0 &1 & -4 \end{array} \right)
$$

Then, applying row operations, we may diagonalise $\Gamma$ in the following steps:

$$ \Gamma\longrightarrow \left( \begin{array}{cccccc}
-3 & 1 & 0 &0  & 0 &0 \\
 1& -6& 0 & 1 &0 &0 \\
0 & 0 &-2 & 0 &0 &0 \\
  0&1& 0 & -\frac{91}{22} & 0& 0 \\
0 & 0 & 0 & 0 &-\frac{11}{4} &0 \\
0& 0 & 0 & 0 &0 & -4 \end{array} \right)  \longrightarrow  \left( \begin{array}{cccccc}
-3 & 0 & 0 &0  & 0 &0 \\
 0& -\frac{1481}{273}& 0 & 0 &0 &0 \\
0 & 0 &-2 & 0 &0 &0 \\
  0&0& 0 & -\frac{91}{22} & 0& 0 \\
0 & 0 & 0 & 0 &-\frac{11}{4} &0 \\
0& 0 & 0 & 0 &0 & -4 \end{array} \right)$$

The de-rationaliser of our preferred vertex is then -273/1481.

\end{example}

\vskip 0.1in We now describe the topological significance of the de-rationaliser. Suppose that we split a plumbing tree $\Gamma$ along an edge $v-w$. We obtain two trees, each with an obvious marked vertex, and we denote these by $\Gamma^v$ and $\Gamma^w$. Topologically, this splitting induces two manifolds with torus boundary, which we also denote by $\Gamma^v$ and $\Gamma^w$. Let us consider  $\Gamma^v$, for argument's sake. Being a manifold with torus boundary, it makes sense to talk about Dehn filling on $\Gamma^v$. It is then an elementary fact that, in terms of the basis $\mu_v, \lambda_v$ described in section 2, $r\in \mathbb{Q}$ Dehn filling on $\Gamma^v$ is represented by the plumbing graph $\Gamma^{v}(r)$, which is $\Gamma^v$ augmented at $v$ by a linear chain with weights $-a_1,..., -a_k$, where $[-a_1,...,-a_k]$ is the Hirzebruch-Jung continued fraction expansion of $r$. We now prove the following lemma, which explains the name de-rationaliser:

\begin{lemma} Let $\Gamma^v$ be as above, and let $1/\Delta_v$ be its de-rationaliser. Then, $\Gamma^{v}(1/\Delta_v)$ has $b_1>0$.
\end{lemma}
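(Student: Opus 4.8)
The plan is to interpret the de-rationaliser homologically rather than purely linear-algebraically. Recall that for a closed plumbed rational homology sphere $\Gamma$, the order of $H_1(\Gamma;\Z)$ is $|\det(\Gamma)|$, and more generally a plumbed manifold fails to be a rational homology sphere precisely when its intersection form is singular. So the statement ``$\Gamma^v(1/\Delta_v)$ has $b_1>0$'' is equivalent to ``the intersection matrix of the plumbing graph $\Gamma^v(1/\Delta_v)$ is singular.'' The first step is therefore to write down the intersection matrix of $\Gamma^v(1/\Delta_v)$: it is $\Gamma^v$ with an extra linear chain $-a_1,\dots,-a_k$ attached at $v$, where $[-a_1,\dots,-a_k] = 1/\Delta_v$ is the Hirzebruch--Jung expansion (note $1/\Delta_v$ may be negative or have the ``wrong'' sign, so one should phrase this via continued fractions formally, or simply work with the rational Dehn-filling coefficient and the effect on the determinant directly).

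The key computation is the behaviour of $\det$ under attaching a chain realizing a continued fraction. The standard fact — which I would prove or cite via the block row-reduction already introduced in the excerpt — is that if $N$ is the intersection matrix of $\Gamma^v$ and we attach at $v$ the chain for a rational number $r$ with Hirzebruch--Jung expansion $[-a_1,\dots,-a_k]$, then after the partial diagonalisation performed in the diagonalising algorithm the $(v,v)$ entry of the reduced matrix becomes (up to the already-diagonalised factors, which are nonzero) exactly $\Delta_v - 1/r$, where $\Delta_v$ is the $(v,v)$ diagonal entry of $\widehat{\Gamma}^{\,v}$, i.e. the de-rationaliser data for the \emph{unfilled} side. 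Concretely: diagonalising $\Gamma^v$ toward $v$ turns its $(v,v)$ slot into $\Delta_v$, and diagonalising the attached chain toward $v$ contributes a correction $-1/[-a_1,\dots,-a_k] = -1/r$ to that slot, exactly as in the bamboo computation $f - 1/[-e_1,\dots,-e_n]$ displayed earlier in the text. Hence $\det$ of the filled matrix equals (nonzero diagonal factors) $\times\,(\Delta_v - 1/r)$.

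Now set $r = 1/\Delta_v$. Then $\Delta_v - 1/r = \Delta_v - \Delta_v = 0$, so the reduced matrix has a zero on the diagonal, its determinant vanishes, the intersection form of $\Gamma^v(1/\Delta_v)$ is singular, and therefore $\Gamma^v(1/\Delta_v)$ is not a rational homology sphere; being a closed plumbed $3$-manifold this forces $b_1>0$. I would also note, for cleanliness, that the ``nonzero diagonal factors'' are exactly the diagonalised entries produced by the algorithm on the subtrees hanging off $v$, all nonzero because negative-definiteness is preserved at each step of the diagonalising procedure, so no degeneracy is hiding there.

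The main obstacle is bookkeeping rather than conceptual: making precise the claim that diagonalising the attached chain toward $v$ contributes precisely $-1/[-a_1,\dots,-a_k]$ to the $(v,v)$ entry, and handling sign/orientation conventions so that the Hirzebruch--Jung expansion of $1/\Delta_v$ is legitimate (one may need $1/\Delta_v < 0$, which holds since $\Delta_v<0$ by negative-definiteness of $\Gamma^v$ — wait, $\Delta_v<0$ gives $1/\Delta_v<0$, good — but the continued-fraction normalisation still needs a remark). I would dispatch this by the same block-row-reduction identity already used in the excerpt's diagonalisation discussion, applied once to $\Gamma^v$ and once to the chain, then combining the two contributions at the shared vertex $v$.
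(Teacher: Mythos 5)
Your proposal is correct and follows essentially the same route as the paper's own proof: diagonalise toward $v$, observe that the subtree contributes $\Delta_v$ and the attached chain contributes $-1/(1/\Delta_v)=-\Delta_v$ to the $(v,v)$ entry, conclude the determinant vanishes, and deduce $b_1>0$ from singularity of the intersection form. The extra care you take with the sign of $1/\Delta_v$ and the nonvanishing of the other diagonal factors is sound but not a different argument.
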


\begin{proof}\label{zerosurgery} Choose $v$ as a root for $\Gamma^{v}(1/\Delta_v)$, and let's compute the entry at $v$ after the diagonalisation process. We have a contribution of $\Delta_v$ from the subgraph $\Gamma^v$, and a contribution of $-\Delta_v$ from the chain, whence this entry is zero. That is, we obtain a diagonal matrix with a zero on the diagonal. Since row operations of the type we are performing don't change the determinant of a matrix, we conclude that $det\; \Big (\Gamma^{v}(1/\Delta_v)\Big )=0$. It is a standard fact that this condition implies $b_1\Big (\Gamma^{v}(1/\Delta_v)\Big )>0$, so we're finished. 
\end{proof}

The following result will be crucial:

\begin{lemma} Let $\Gamma^v$ be a negative-definite matrix with a marked vertex, $v$. Then, the manifold $\Gamma^v(1/\Delta_v)$ admits a horizontal foliation. Moreover, the manifold with boundary $\Gamma^v$ admits a horizontal foliation detected by $1/\Delta_v$ (in terms of the canonical basis $\mu_v$, $\lambda_v$).
\end{lemma}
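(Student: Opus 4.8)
The plan is to induct on the number of nodes of $\Gamma^v$, gluing horizontal foliations piece by piece along JSJ tori, using the previous lemma (Lemma~\ref{zerosurgery}) together with Theorem~\ref{stipi} to anchor the base case. First I would handle the case where $\Gamma^v$ is itself a bamboo (no nodes): then $\Gamma^v$ is a solid torus or more generally a Seifert piece over the disk, and the claimed horizontal foliation with the prescribed boundary slope is classical — indeed one can realize it directly, since filling $\Gamma^v$ along $1/\Delta_v$ produces by Lemma~\ref{zerosurgery} a manifold with $b_1>0$ that is Seifert-fibered, and by Theorem~\ref{stipi} (or the underlying results of Boyer--Gordon--Watson on Seifert pieces with a horizontal foliation restricting to a given boundary slope) it carries a horizontal foliation whose restriction to the filling torus is by simple closed curves; pulling this back to $\Gamma^v$ gives a horizontal foliation detected by $1/\Delta_v$.

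For the inductive step, I would pick a node $u$ of $\Gamma^v$ adjacent to a leaf-most "branch" of the tree, i.e. choose the decomposition of $\Gamma^v$ along the JSJ tori incident to $u$ so that all but one of the resulting pieces are bamboo-rooted trees $\Gamma^{w_i}$ ($i=1,\dots,m$) and one piece, call it $\Gamma^{v}_0$, is a smaller tree still containing the marked vertex $v$. Each $\Gamma^{w_i}$ has its own de-rationalised slope $1/\Delta_{w_i}$, and by the base case (or a lower induction) admits a horizontal foliation detected by that slope. The Seifert piece sitting at $u$ is a circle bundle over a sphere with $m+2$ holes (the $m$ tori to the branches, the torus toward $v$, and — if $u$ carries weight — its own structure); by the gluing theory for horizontal foliations on graph manifolds (as in \cite{BB}, using that horizontal foliations on the JSJ pieces over orientable base can be matched up when the boundary slopes are compatible, and that on a planar Seifert piece one has enough freedom in choosing the transverse foliation to realize any admissible tuple of boundary slopes away from the fiber slope), I can glue the foliations on the $\Gamma^{w_i}$ across $u$ and read off the induced slope on the remaining boundary torus toward $\Gamma^v_0$. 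The key linear-algebra bookkeeping — and this is exactly where the de-rationaliser is tailor-made — is that the slope induced on the $\Gamma^v_0$-side after absorbing $u$ and the branches is precisely the one obtained by the diagonalisation algorithm restricted to $\Gamma^v_0$, so that continuing inductively I arrive at the marked vertex $v$ with exactly the slope $1/\Delta_v$. Finally, Dehn filling $\Gamma^v$ along $1/\Delta_v$ amounts to capping off with a bamboo whose continued fraction is $1/\Delta_v$, which is again a (bamboo) Seifert piece carrying a horizontal foliation with matching boundary slope; gluing it in produces the closed (or lower-complexity) manifold $\Gamma^v(1/\Delta_v)$ with a global horizontal foliation.

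The main obstacle I anticipate is the gluing compatibility at each node: to splice horizontal foliations across a JSJ torus one needs the boundary slopes coming from the two sides to be \emph{distinct from the respective Seifert fiber slopes} and to satisfy the intersection-number condition that makes the glued object still transverse to both fibrations (this is the content of the gluing lemmas in \cite{BB}, \cite{BRW}). Verifying that the de-rationaliser slope $1/\Delta_v$ never coincides with a fiber slope — equivalently that the diagonal entry $\Delta_v$ stays finite and the filling is genuinely "rational, non-fiber" — is where negative-definiteness must be used essentially: the diagonalisation algorithm preserves negative-definiteness, so none of the intermediate $\Delta_{w_i}$ or $\Delta_v$ can blow up or hit the degenerate value, and the bad (fiber-slope) filling is excluded because it would correspond to a singular or non-negative-definite intersection form. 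A secondary, more bookkeeping-heavy obstacle is making the induction's choice of node and branch decomposition canonical enough that the "slope induced toward $v$" genuinely equals the de-rationaliser computed by the algorithm; I would phrase this as a separate sublemma identifying the partial-diagonalisation entry at a torus with the boundary slope of the glued-up foliation on the other side, and then the whole argument is a clean induction.
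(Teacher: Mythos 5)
Your proposal is correct and follows essentially the same route as the paper: induction on the number of nodes, with the base case anchored by the existence of horizontal foliations on Seifert-fibered manifolds over $S^2$ with $b_1>0$, and the inductive step splitting off a terminal node and gluing the pieces via their de-rationaliser slopes, using that the plumbing gluing matrix exchanges $r$ and $1/r$. One small correction: for the base case the paper invokes Eisenbud--Hirsch--Neumann \cite{Eisen} rather than Theorem~\ref{stipi}, since the latter is stated only for Seifert-fibered \emph{rational homology spheres} while the filled manifold $\Gamma^v(1/\Delta_v)$ has $b_1>0$; negative-definiteness enters exactly where you anticipated, to guarantee $1/\Delta_v\neq 0$ so that the filling is Seifert-fibered and the slope is not the fiber slope.
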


\begin{proof} We prove the claim by induction on the number of nodes. The one-node case is essentially an old result of Eisenbud-Hirsch-Neumann \cite{Eisen}, stating that Seifert-fibered manifolds over the sphere with $b_1>0$ admit a horizontal foliation. That $\Gamma^v(1/\Delta_v)$ is Seifert-fibered is clear from the fact that $1/\Delta_v\neq 0$, which in turn follows from negative-definiteness. Moreover, we can think of the core of the surgery solid torus as representing a fiber from this Seifert fibration. Removing a regular neighborhood of this fiber, and observing that the meridian of the surgery solid torus was represented by the slope  $1/\Delta_v$ (in terms of the canonical basis $\mu_v$, $\lambda_v$), we deduce that $\Gamma^v$ has a horizontal foliation detected by the slope $1/\Delta_v$ .
\par For the inductive step, let $\Gamma^v$ have $N>1$ nodes. Choose a terminal node $w\neq v$ in $\Gamma^v(1/\Delta_v)$,  (\textit{i.e.} a node such that deleting all its neighbours results in a disconnected graph with one component having exactly $N-1$ nodes), and delete its neighbour leading to $v$, say, $z$. We obtain a one-node graph $\Gamma^w$ and an $N-1$-node graph $\big ( \Gamma^v(1/\Delta_v)\big )^z$. Now, it is clear that $\Gamma^w(1/\Delta_w)$ admits a horizontal foliation, and that $\Gamma^w$ admits a horizontal foliation detected by $1/\Delta_w$. Now, look at $\big ( \Gamma^v(1/\Delta_v)\big )^z$. It follows from the diagonalisation algorithm described earlier in the section that $\big ( \Gamma^v(1/\Delta_v)\big )^z(\Delta_w)$ has $b_1>0$. By inductive hypothesis, this manifold has a horizontal foliation. Moreover, $\big ( \Gamma^v(1/\Delta_v)\big )^z$ has a horizontal foliation detected by the slope $\Delta_w$ (in terms of the basis $\mu_z, \lambda_z$ for its boundary component). Since the gluing matrix is $ \phi=\left( \begin{array}{cc}
0 & 1 \\
1 & 0 \\
 \end{array} \right) $, we can glue together the foliations to obtain our desired horizontal foliation. That $\Gamma^v$ admits a horizontal foliation detected by $1/\Delta_v$ is straightforward.
\end{proof}

\begin{remark} Of course, the proof of Lemma \ref{zerosurgery} generalises to plumbing matrices which are merely nonsingular. It seems plausible that one should be able to produce taut foliations detected by boundary slopes in a more general situation (at least for graph manifolds only with JSJ pieces which fiber over a surface of genus zero) using the methods developed by Gabai in the proof of the Property R conjecture \cite{Gabai}, but we decided to keep things simple and content ourselves with what is necessary.
\end{remark}

We can now establish Theorem A:

\begin{proof}[Proof of Theorem A] We proceed by induction on the number of nodes, $N$. Recall that the hypothesis is a negative-definite graph with a very bad vertex. If $N=1$, then $\Gamma$ is a Seifert-fibered space which is not a lattice cohomology $L$-space. Theorem \ref{stipi} implies the result. For the inductive step, suppose that $\Gamma$ has $N>1$ nodes. We wish to show that $\Gamma$ has a horizontal foliation. Let $v$ be a very bad node. Choose a path to an arbitrary neighbouring node, and pick the first vertex $w$ after $v$. Split $\Gamma$ at $v-w$. Consider $\Gamma^w$, and do $1/\Delta_w$-surgery. By Lemma \ref{zerosurgery}, $\Gamma^{w}$ has a horizontal foliation, detected by $1/\Delta_w$. Once again, since $ \phi=\left( \begin{array}{cc}
0 & 1 \\
1 & 0 \\
 \end{array} \right) $, the induced surgery coefficient on $\Gamma^v$ will be $\Delta_w$. We must now check that $\Gamma^{v}(\Delta_w)$ is negative-definite with a very bad vertex. Since $\Delta_w<0$, we may take its continued fraction expansion to consist exclusively of negative numbers, so that to certify that $\Gamma^{v}(\Delta_w)$ is negative-definite, it suffices to show that the following matrix is negative-definite:

$$ \left( \begin{array}{cccc}
 & & & 0 \\
 &\Gamma^v  & &... \\
 &  & & 1\\
0&...&1 &\Delta_w \\
 \end{array} \right) $$

\noindent partially diagonalising $\Gamma$ around $v$, and attacking the $\Gamma^w$ subgraph first, one obtains exactly the form written above, so that $\Gamma^{v}(\Delta_w)$ is indeed negative-definite. The only thing to worry about now is if the continued fraction expansion of $\Delta_w$ starts with $-1$, for then we must put the graph into minimal form by blowing down. Doing so will always increase the weight of $v$ by one, so that it definitely remains very bad even in this situation.  We can now say that, by inductive hypothesis, $\Gamma^{v}(\Delta_w)$ has a horizontal foliation. Moreover, it is easy to see that $\Gamma^{v}$ has a horizontal foliation detected by $\Delta_w$. Hence, we may glue together the foliations on $\Gamma^v$ and $\Gamma^w$, and the result is established. 
\end{proof}

We now demonstrate Theorem B:

\begin{proof}[Proof of Theorem B] Once again, we proceed by induction on the number of nodes. The base case is an application of Proposition \ref{lattice}, together with Theorem \ref{stipi}. For the inductive step, we suppose once again that $\Gamma$ has $K>1$ nodes, and a proper $E_8$ subgraph. The only thing that has to be checked is whether the surgery step after we cut still gives us a proper $E_8$ subgraph. Recall the labelled $E_8$ graph from Figure 1. There are 4 cases to consider, depending on where we find a piece of $\Gamma$ abutting from the $E_8$ subgraph.

\vskip 0.1in \noindent \textit{Case 1 abutting from $X$:} This case is the easiest. If there is abutting from $X$, then $\Gamma$ has a very bad vertex, and the conclusion of the theorem follows at once from Theorem A.

\vskip 0.1in
\noindent \textit{Case 2 abutting from the chain $x_1-...-x_4$:} Our strategy will be to cut at the edge $X-x_1$. In doing so, we are stealing from the $E_8$ subgraph, and so we must ensure that the continued fraction expansion of the surgery coefficient for $\Gamma^X$, which is $\Delta_{x_1}$, starts with four $-2$ and does not strictly end there, in order to obtain a proper $E_8$ subgraph for the inductive step. To verify this, we estimate the de-rationaliser of $\Gamma^{x_1}$ at $x_1$. First of all, notice that $\Gamma$ contains the following sub-configuration, and this must be negative definite.

\begin{center}
\includegraphics[scale=0.30]{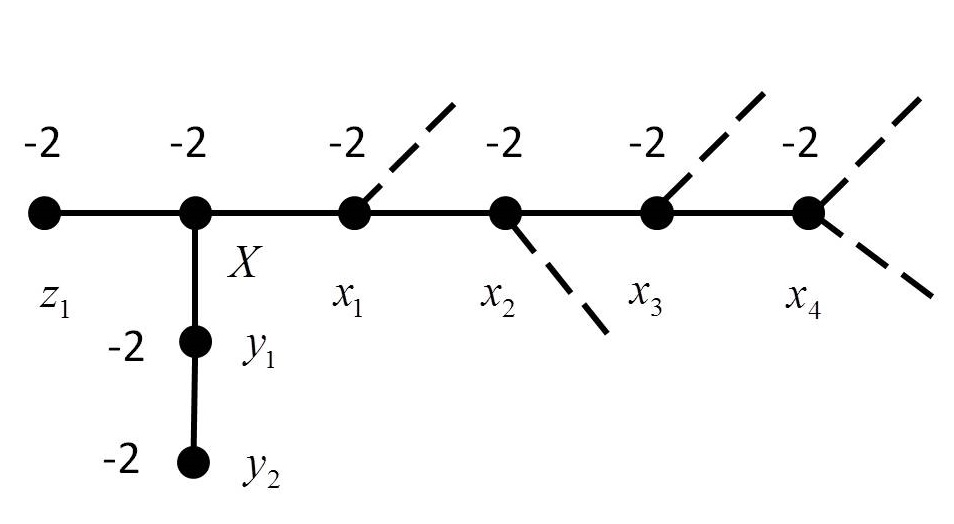}
\end{center}

\noindent Partially diagonalising the graph above into $x_1$, attacking the side which contains $X$, we calculate that $-\frac{5}{6}$ surgery on $\Gamma^{x_1}$ is still negative-definite. This places a lower bound of $-\frac{5}{6}$ for the de-rationaliser of $\Gamma^{x_1}$. To get an upper bound, we diagonalise the subgraph $\Gamma^{x_1}$ into $x_1$. Since the contribution coming into $x_4$ is non-negative, we get that $x_3$ receives a contribution $r_{x_4}\geq \frac{1}{2}$ from $x_4$. The same reasoning implies that $x_2$ now receives a contribution $r_{x_3}\geq \frac{1}{2-\frac{1}{2}}= \frac{2}{3}$ from $x_3$. Finally, $x_1$ receives a total contribution from its neighbours of $r> \frac{3}{4}$, where the inequality is strict since we are assuming that there are some vertices abutting from the chain $x_1-...-x_4$. Since we want to obtain a $0$ at $x_1$ when we add the contribution coming from the de-rationaliser, we immediately obtain the upper bound $1/\Delta_{x_1}<\frac{1}{-2+\frac{3}{4}}=-\frac{4}{5}$. Thus, $-\frac{5}{4}<\Delta_{x_1}<-\frac{6}{5}$. It is straightforward to verify, given the bounds above, that the Hirzebruch-Jung continued fraction expansion of $\Delta_{x_1}$ starts with four $-2$ and does not strictly end there, so that $\Gamma^X(\Delta_{x_1})$ contains a proper $E_8$ subgraph, and the inductive step of the argument can go through.

\vskip 0.1in
\noindent \textit{Case 3 abutting from the chain $y_1-y_2$:} We cut at $y_1-X$. This time, we must show that the continued fraction expansion of $\Delta_{y_1}$ starts with two $-2$ and does not strictly end there, to obtain a proper $E_8$ subgraph. Here, the lower bound for the de-rationaliser is computed to be $-\frac{7}{10}$. The upper bound is found to be $-\frac{2}{3}$, whence $-\frac{3}{2}<\Delta_{y_1}<-\frac{10}{7}$, which is good enough so that we may proceed.

\vskip 0.1in
\noindent \textit{Case 4 abutting from $z_1$:} We cut at $z_1-X$. Now, we only need the continued fraction to start with $-2$ and not strictly end there. Using similar methods as above, we obtain the estimates $-2<\Delta_{z_1}<-\frac{15}{8}$, so that the inductive step can go through in this case.

\vskip 0.1in 
\par We've just verified that the inductive argument can proceed in all situations. Now, the horizontal foliation argument in the proof of Theorem A works verbatim for Theorem B (with the property of having a very bad vertex replaced with the property of having a proper $E_8$ subgraph), so we are finished.
\end{proof}

\begin{remark} In \cite{BB}, the authors establish Conjecture \ref{LO} for graph-manifold integer homology spheress by successfully constructing a horizontal foliation on any graph-manifold integral homology sphere not homeomorphic to $S^3$ or the Poincar\'e homology sphere $\Sigma(2,3,5)$ (such a foliation cannot exist in these two cases). Unfortunately, it is not hard to construct graph-manifold rational homology spheres without horizontal foliations (in the stronger sense of our definition), so that the strategy of proof of theorems A and B cannot work in general. Consider the negative-definite plumbing $\Gamma$ below.

\begin{center}
\includegraphics[scale=0.25]{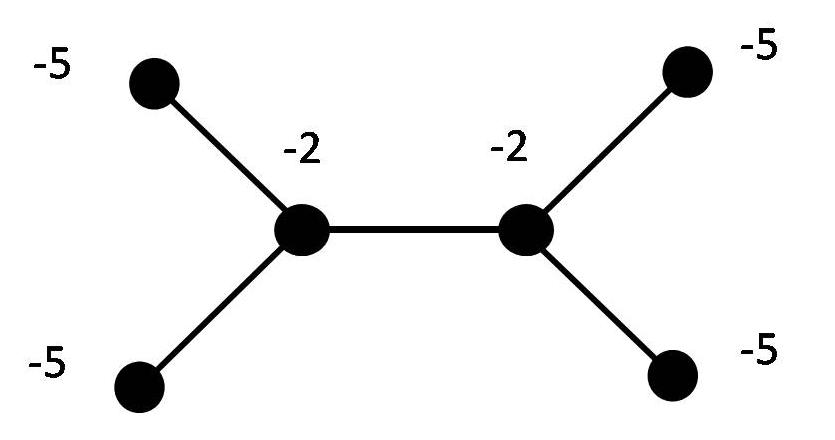}
\end{center}

\noindent This has a pair of bad vertex neighbours, so a quick check with Laufer's algorithm shows that $\Gamma$ is not a lattice cohomology $L$-space. Moreover, it is even known in this case that $\Gamma$ is not a Heegaard Floer $L$-space \cite{Nem3} (we are thankful to Andr\'as N\'emethi for pointing this out). We claim that $\Gamma$ has no horizontal foliations. To see this, we split $\Gamma$ along the central edge to obtain two identical graphs $\Gamma^v$, and we will try to build the foliation by gluing it along the Seifert-fibered pieces. Notice, by blowing down, that $\Gamma^v(-1)$ is negative-definite, in fact, homeomorphic to a lens space. Hence, any surgery $\Gamma^v(r)$ with $r\leq -1$ gives a manifold which is not a lattice cohomology $L$-space, hence without horizontal foliations, by Theorem \ref{stipi}. If we do $r\in \mathbb{Q}^+$ surgery, we need to massage $\Gamma^v(r)$ into negative-definite form. Using the plumbing calculus, it is easy to show that this involves decreasing the weight of the central node by $\lceil 1/r \rceil$ units, and extending by the continued fraction expansion of $-(1/r-\lceil 1/r \rceil)^{-1}$. Since $1/r-\lceil 1/r \rceil<1$, the continued fraction expansion of $-(1/r-\lceil 1/r \rceil)^{-1}$ does not start with $-1$. We see that $\Gamma^v(r)$ can be represented by a minimal negative-definite graph with no bad vertices, so that we cannot put a horizontal foliation on it, by Theorem \ref{stipi}. In addition, $0$ surgery gives a connected sum of lens spaces (this can be seen by plumbing calculus, or just by standard Seifert manifold theory), and this is also no good. We conclude that $\Gamma^v$ cannot have a horizontal foliation detected by any slope in $\mathbb{Q}-(-1,0)$. Finally, since the gluing matrix $\phi$ takes $r$ to $1/r$, there is no need to study the case $r\in (-1,0)$. Thus, it is impossible for $\Gamma$ to admit a horizontal foliation.
\end{remark}

\subsection{Theorem C}

We now turn to the proof of Theorem C. In order to do so, we first describe a presentation for the fundamental group of a plumbed $3$-manifold, due to Mumford \cite{Mumford}:

\begin{thm} Let $\Gamma$ be a be a plumbing along a graph, with vertices $\{v_i\}_{i=1}^N$ and suppose we take only circle bundles over a sphere. Denote the set of edges of the graph by $E(\Gamma)$. Given a vertex $v$, choose a cyclic ordering of its neighbours.  Let $N(v)$ denote the cyclically ordered set of neighbours of $v$. Then, $\pi_1(\Gamma)$ admits the following presentation:

$$ \pi_1(\Gamma)=\Big \langle v_i \;  \Big | \; v_i^{-w(v_i)}=\prod_{v_k\in N(v_i)}v_k \;,\; [v_j,v_k]=1 \;if\; v_j-v_k\in E(\Gamma)           \Big \rangle
$$
\end{thm}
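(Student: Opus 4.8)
The plan is to build the manifold $\Gamma$ from its Seifert-fibered constituent pieces and to compute $\pi_1$ by an iterated application of the Seifert--van Kampen theorem, exploiting throughout the fact that the underlying graph is a tree. First I would isolate the building block attached to a single vertex $v$. As recalled in Section~2, the piece $P_v$ is a circle bundle of Euler number $w(v)$ over $S^2$ from which $n(v)$ fiber-neighbourhoods have been drilled, and it is homeomorphic to $(S^2 \setminus n(v)\text{ disks}) \times S^1$. Writing $\mu_{v,1},\dots,\mu_{v,n(v)}$ for the canonical boundary curves and $\lambda_v$ for the fiber class, I would establish
\[
\pi_1(P_v) = \big\langle\, \mu_{v,1},\dots,\mu_{v,n(v)},\lambda_v \;\big|\; [\lambda_v,\mu_{v,j}]=1\ \forall j,\ \textstyle\prod_{j}\mu_{v,j}=\lambda_v^{-w(v)} \,\big\rangle ,
\]
with $\lambda_v$ central. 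The point of this computation is that, although the drilled bundle is topologically trivial, the boundary curves $\mu_{v,j}$ must be measured against the section coming from the global Euler-number structure, and this framing is exactly what forces the single relation $\prod_j \mu_{v,j}=\lambda_v^{-w(v)}$ (up to orientation conventions fixing the sign). As a sanity check, an isolated vertex then yields $\pi_1 \cong \mathbb{Z}/|w(v)|$, the fundamental group of the corresponding lens space.

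Next I would assemble the pieces. Since $\Gamma$ is a tree, I can order its edges and attach one piece at a time; at each stage the partially assembled manifold meets the incoming piece in a single torus $T$ with $\pi_1(T)\cong\mathbb{Z}^2$, and after thickening $T$ to an open collar so that the connectivity and openness hypotheses of van Kampen are met, the resulting presentation is the pushout amalgamating over the image of $\pi_1(T)$. The essential input at this stage is the gluing datum: the matrix $\phi=\left(\begin{smallmatrix} 0 & 1 \\ 1 & 0 \end{smallmatrix}\right)$ interchanges the meridian of one side with the fiber of the other, so that across each edge $v_i$--$v_j$ one has the identifications $\mu_{i,j}=\lambda_j$ and $\mu_{j,i}=\lambda_i$ (up to orientation) inside the amalgam. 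Because the graph has no cycles, every gluing is a genuine amalgamation over a torus subgroup and never an HNN extension, which keeps the induction clean.

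Finally I would perform the evident Tietze transformations. Setting $v_i:=\lambda_i$, the identifications $\mu_{i,j}=\lambda_j$ eliminate every meridian generator in favour of the fiber generators $\{\lambda_k\}$. Under this substitution the centrality relation $[\lambda_i,\mu_{i,j}]=1$ becomes $[v_i,v_j]=1$ for each edge, recovering precisely the commuting relations indexed by $E(\Gamma)$; and the per-piece relation $\prod_{j}\mu_{i,j}=\lambda_i^{-w(i)}$ becomes $\prod_{v_k\in N(v_i)}v_k=v_i^{-w(v_i)}$, recovering the defining relation at each vertex. Once I verify that every generator and relation of the assembled pushout is accounted for and that no spurious relation survives the elimination, the result is exactly Mumford's presentation.

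I expect the main obstacle to be the single-piece computation, namely pinning down the relation $\prod_j \mu_{v,j}=\lambda_v^{-w(v)}$ together with its sign, since this is the only place where the Euler number $w(v)$ genuinely enters the argument; everything downstream is bookkeeping governed by van Kampen and the involution $\phi$. A secondary point demanding care is checking the connectivity hypotheses at each gluing and confirming that the identifications coming from $\phi$ are applied with consistent orientations, so that the meridians are eliminated without introducing stray inverses.
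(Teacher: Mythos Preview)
The paper does not supply a proof of this theorem: it is stated as a background result attributed to Mumford \cite{Mumford} and used as a black box in the subsequent arguments. There is therefore nothing in the paper against which to compare your argument directly.

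That said, your outline is the standard and correct way to establish this presentation, and it is essentially Mumford's own route. The computation of $\pi_1(P_v)$ for the drilled piece, the identification of the gluing map $\phi$ as swapping meridian and fibre, the iterated van Kampen amalgamation along the tree, and the final Tietze elimination of the meridians are exactly the steps one performs. Your identification of the delicate point---that the Euler number enters only through the framing of the $\mu_{v,j}$ relative to the global section, giving $\prod_j\mu_{v,j}=\lambda_v^{-w(v)}$---is also on target; once that relation is pinned down with the correct sign convention, the remainder is indeed bookkeeping. One small remark: for a vertex with no neighbours (an isolated vertex, which does not arise for the trees considered later in the paper but which you use as a sanity check) there are no drilled fibres, so strictly speaking the displayed presentation of $\pi_1(P_v)$ degenerates and one should treat that case separately; your conclusion $\pi_1\cong\mathbb{Z}/|w(v)|$ is nonetheless correct.
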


\begin{remark} Even though picking different orderings of the neighbours results necessarily in presentations describing isomorphic groups, we are not allowed to change the cyclic ordering once we've picked a particular presentation.
\end{remark}

Let us recall the hypotheses of Theorem C: we have a negative-definite insulated graph $\Gamma$.  We will argue by contradiction, so from now until the end of the proof we will suppose that $\pi_1(\Gamma)$ admits some left-ordering $>$. Clearly, we may assume that $\Gamma$ has at least two vertices (otherwise, $\pi_1$ is finite and the result is trivial). Now, arbitrarily pick a vertex $Y$, and present $\Gamma$ as an ordered tree, rooted at $Y$. This naturally induces a notion of height along the tree, measured by distance to $Y$. For notational convenience, we will take $Y$ to be at the ``highest point''. For a given vertex $X$, we say its descendents are its neighbours at a lower height, and its parent is the (unique) neighbour at higher height.

\vskip 0.1in \noindent We need to prove two preliminary lemmata:

\begin{lemma}\label{badnode} Let $\Gamma$ be a rooted tree. Suppose that $X$ is a bad node, with descendents $x_1,..., x_n$, and parent $z$. If we have that $X\geq x_i^2>1$ $\forall i$, then $z^2>X$.
\end{lemma}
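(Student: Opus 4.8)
The idea is to use the group relation at the node $X$, namely $X^{-w(X)} = z \cdot x_1 \cdots x_n$ (for some cyclic ordering of the neighbours of $X$, with the parent $z$ inserted somewhere in the product), and exploit the fact that all these generators commute with $X$ — but crucially they do \emph{not} commute with each other. Since $\Gamma$ is negative-definite and insulated, $X$ is a bad node, meaning $d(X) = n(X) - |w(X)| \geq 1$, i.e. $-w(X) \geq n(X) + 1 = (n+1) + 1 = n+2$ if $z$ is the parent and $x_1,\dots,x_n$ the descendents (so $n(X) = n+1$). Thus the left side is $X^{m}$ with $m = -w(X) \geq n+2$. First I would rewrite the relation to solve for $z$: $z = X^{m} x_n^{-1} \cdots x_1^{-1}$ (up to the placement of $z$ in the cyclic product, which only conjugates; I should be careful here and perhaps use the relation in the form that isolates $z$ cleanly).

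The core of the argument is an inequality manipulation in the left-ordered group, where I write $a^2 > b$ as shorthand (following the lemma's notation) presumably for $a \cdot a > b$ in the ordering, or more likely $a$ and $b$ are being compared via some auxiliary "size" function — I need to read the ambient conventions, but the natural reading is that these are literal comparisons of group elements under $>$, with the hypothesis $X \geq x_i^2 > 1$ meaning each $x_i$ is positive and "small" relative to $X$. The plan is: from $z = X^m \cdot (x_1 \cdots x_n)^{-1}$ (reordering the commuting-with-$X$ factors as needed), and from $x_i^2 \leq X$ together with positivity, I want to bound the product $x_1 \cdots x_n$ from above by something like $X^{n/2}$ or more crudely by $X^{n}$ but with enough room, so that $z^2 = X^{2m}(x_1\cdots x_n)^{-2} > X^{2m - 2n} \geq X^{2(n+2) - 2n} = X^4 > X$. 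The delicate point is that in a merely left-ordered (not bi-ordered) group, $a > b > 1$ does \emph{not} imply $a^k > b^k$ in general, and products of positives interleaved with conjugations need care; I expect to need the fact that all the $x_i$ commute with $X$ (and hence $X$-powers slide past them freely) to keep the estimates honest, and to invoke positivity/negativity of specific subwords rather than naive monotonicity.

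So the key steps, in order: (1) record that $X$ being a bad node forces $-w(X) \geq n+2$ where $n = \#\{x_i\}$; (2) write down Mumford's relation at $X$ and isolate $z$ (modulo a conjugation coming from the cyclic position of $z$), using that each $x_i$ and $z$ commute with $X$; (3) using $1 < x_i^2 \le X$ and the commuting, show each $x_i < X$ hence the "tail" product $x_1\cdots x_n$ is bounded above in a controlled way — here I would argue that $x_i^{-1} > X^{-1}$ (since $x_i < X$, inverting reverses and... again care needed, so more likely I bound $x_1\cdots x_n \le X^{\lceil n/2 \rceil}$-type statements via the $x_i^2 \le X$ hypothesis grouped in pairs, sliding $X$'s around); (4) conclude $z = X^{m}\cdot(\text{something} \le X^{\text{small}})$ so that $z$ is "at least" $X^{m - (\text{small})}$, and then $z^2 \ge X^{2(m - \text{small})} > X$ because $2(m-\text{small})$ comfortably exceeds $1$.

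The main obstacle I anticipate is step (3)–(4): the passage from comparisons of single elements to comparisons of their powers and products is \emph{not} automatic in a left-ordered group, so the write-up must either (a) work entirely with elements that commute (exploiting that $X$ is central relative to all the $x_i$ and $z$, turning the relevant subgroup-slice into an abelian-ordered situation where monotonicity \emph{does} hold), or (b) use Conrad-type positive-cone arguments. I expect the intended argument keeps everything inside the abelian subgroup generated by $X$ and the $x_i$'s where $X$ sits — but $x_i$ and $x_j$ need not commute, so really the trick must be to first reduce to comparing $z^2$ against $X$ by squaring the relation and using that $X$ commutes with each factor to collect all $X$-powers together: $z^2 = X^{2m}\cdot w$ where $w$ is a word in the $x_i^{\pm 1}$ alone (the cross terms all survive), and then the hypothesis $x_i^2 \le X$ must be strong enough to force $w^{-1} < X^{2m-1}$, i.e. $w > X^{1-2m}$, which should follow from each $x_i$ being bounded by $X^{1/2}$ in a suitable sense. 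Pinning down exactly which elementary inequalities in left-ordered groups are being used is the crux, and I would be explicit about invoking only: (i) $a>1, b>1 \Rightarrow ab>1$; (ii) $a>b \Rightarrow ca>cb$ and $ac \gtrless bc$ requires care; (iii) for commuting $a,b$: $a>1,b>1\Rightarrow a^j b^k>1$. Everything should be arranged to use only these.
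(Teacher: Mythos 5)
There is a genuine gap, and it begins with an inverted inequality. You assert that $X$ bad means $-w(X)\geq n(X)+1$, but the paper's convention is $d(v)=n(v)-|w(v)|$, so $d(X)\geq 1$ means $|w(X)|\leq n(X)-1$: a bad vertex has \emph{small} weight relative to its valence. Since $\Gamma$ has no very bad vertices, in fact $|w(X)|=n(X)-1=n$ exactly (with $n$ descendents plus the parent $z$), so the relation at $X$ reads $X^{n}=z\,x_1\cdots x_n$ rather than $X^{m}$ with $m\geq n+2$. This destroys the slack your plan relies on: with the crude bound ``each $x_i\leq X$'' you would get $z^2\geq X^{2n-2n}=1$, which is vacuous, so you are forced to use the sharper hypothesis $x_i^2\leq X$ to gain the factor $X^{n/2}$ --- and that is precisely the step you leave unresolved. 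Bounding $x_1\cdots x_n$ above by a power of $X$ requires comparing, e.g., $x_1x_2$ with $x_2x_2$, which differ in the \emph{left} factor and hence cannot be compared by left-invariance alone; the $x_i$ do not commute with each other, and you cannot relabel them because the cyclic ordering in Mumford's presentation is fixed once chosen. You correctly flag this as the crux, but a plan whose crux is unresolved is not yet a proof.

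The paper's actual argument sidesteps the product-bounding problem entirely. After disposing of the easy case $z\geq X$ (where $z>1$ gives $z^2>z\geq X$ immediately), it writes the relation as $X^{K}z^{-1}(x_1^{-1}X)\cdots(x_n^{-1}X)=1$ with $K=0$, and uses conjugation to cyclically permute the factors so that a chosen $x_i^{-1}$ sits at the \emph{right end} of the word. There the substitution $x_i^{-1}\geq X^{-1}x_i$ (equivalent to $X\geq x_i^2$) can be performed using only left-invariance, and since every remaining bracketed factor is $>1$, one extracts $x_{i-1}>x_i$; iterating yields the strict chain $z>x_1>\cdots>x_n$. Assuming the conclusion fails, $X\geq z^2$ gives $z^{-1}\geq X^{-1}z$, and the same one-factor-at-the-end manipulation produces $x_n>z$, contradicting the chain. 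If you want to salvage your approach, you would essentially have to rediscover this device (or an equivalent Conrad-type positive-cone argument); as written, steps (3)--(4) of your plan do not go through.
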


\begin{proof} We start by pointing out that it is also true that $x_i^2>x_i$, so that $X>x_i>1$. If $z\geq X>1$, the result is immediate: $z>1\Rightarrow z^2>z$, so that $z^2>X$. So we may take $X>z$. Now, write the group relation at the node $X$. Since neighbours commute, we may write this as

$$ X^{K}z^{-1}(x_1^{-1}X)...(x_n^{-1}X)=1
$$

\noindent where $K\geq 0$ (this uses the fact that the vertex is not very bad). Now, by conjugating, we may permute cyclically the order of the neighbours. Therefore, we may write

$$ X^{K}(x_3^{-1}X)...(z^{-1}X)(x_1^{-1}X)x_2^{-1}=1
$$

\noindent Now, $X\geq x_2^2$ implies that $x_2^{-1}\geq X^{-1}x_2$. We may substitute in the above (it is left as an exercise to the reader to verify that this can be done with left-orderings) to get

$$ 1\geq  X^{K}(x_3^{-1}X)...(z^{-1}X)(x_1^{-1}X)x_2X^{-1}=X^{K}(x_3^{-1}X)...(z^{-1}X)(x_1^{-1}x_2)
$$

\noindent Now, since all the factors in brackets with $X$ are $>1$, we must have the strict inequality $1>x_1^{-1}x_2\Rightarrow x_1>x_2$. Cyclically permuting more and applying the same reasoning with the appropriate relations one obtains the chain

$$z>x_1>x_2>...>x_n
$$

 \noindent Now suppose that the conclusion of the lemma is false. Then, $1\geq X^{-1}z^2\Rightarrow z^{-1}\geq X^{-1}z$. The same reasoning as above shows that $x_n>z$, which is a contradiction. Therefore, $z^2>X$.
\end{proof}

\noindent Having just done bad nodes, we now investigate how the ordering behaves as we go up the rooted tree via a good vertex:

\begin{lemma}\label{goodnode} Let $\Gamma$ be an insulated rooted tree, and let $X$ be a good vertex. Partition its descendents into bad vertices $y_1,..,y_k$ and good vertices $x_1,..,x_{l}$. Suppose that $X^2>y_i>1$, and that $X>x_i>1$. Let $z$ be the parent neighbour. Then,

\begin{enumerate}

\item $z>X$, if $z$ is a good vertex.
\item $z>X^2$, if $z$ is a bad vertex.
\end{enumerate}
\end{lemma}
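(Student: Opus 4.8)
The plan is to proceed exactly as in Lemma \ref{badnode}: write down the Mumford relation at $X$, rearrange it using that $X$ commutes with each of its neighbours, and then compare the two sides of the resulting identity in the left-ordering. Since $\Gamma$ is negative-definite we have $w(X)\le -1$, so the relation at $X$ reads $X^{|w(X)|}=\prod_{v\in N(X)}v$, the product running over the cyclically ordered neighbours $z,y_1,\dots,y_k,x_1,\dots,x_l$. Conjugating by a neighbour (which commutes with $X$) to bring $z$ to the front, and then sliding surplus powers of $X$ past the neighbours, I would rewrite this relation in the form
$$1=X^{m}\,z^{-1}\,(y_1^{-1}X^2)\cdots(y_k^{-1}X^2)\,(x_1^{-1}X)\cdots(x_l^{-1}X),$$
where two copies of $X$ have been allotted to each bad descendant and one to each good descendant. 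Counting powers of $X$ and using $n(X)=k+l+1$ gives $m=|w(X)|-2k-l=1-k-d(X)$.

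This is where insulation is used: condition $(1)$ of Definition \ref{insu}, applied to the good vertex $X$, gives $d(X)+K(X)\le 0$, and $K(X)=k$ when $z$ is good while $K(X)=k+1$ when $z$ is bad; hence $m\ge 1$ in the first case and $m\ge 2$ in the second (in particular $m\ge 0$, so the rearrangement above is legitimate). Now every parenthesised factor is $>1$: from $X^2>y_i$ one gets $y_i^{-1}X^2>1$, from $X>x_j$ one gets $x_j^{-1}X>1$, and $X>1$ (immediate from the hypotheses) gives $X^m>1$. Since in a left-ordered group a product of elements $>1$ is again $>1$, the portion of the identity to the right of $z^{-1}$ is $>1$; combined with $X^m>1$ this forces $X^mz^{-1}<1$, and left-multiplying by $z$ and using once more that $z$ commutes with $X$ yields $X^m<z$. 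Finally $X>1$ gives $X^m\ge X$ for $m\ge 1$ and $X^m\ge X^2$ for $m\ge 2$, so $z>X^m\ge X$ when $z$ is good and $z>X^m\ge X^2$ when $z$ is bad, as claimed.

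I do not anticipate a genuine obstacle. The two points that require care are: (a) the bookkeeping of the exponent $m$, and the observation that the insulation inequality $d(X)+K(X)\le 0$ is exactly what affords two copies of $X$ per bad descendant \emph{together with} a spare copy (two spares when $z$ is bad); and (b) the discipline of working in a left-ordered group, where only left multiplication is order-preserving --- this is precisely why the surplus powers of $X$ must be shuffled around via the commutation relations between $X$ and its neighbours, just as in Lemma \ref{badnode}.
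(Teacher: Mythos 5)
Your proof is correct and follows essentially the same route as the paper's: write Mumford's relation at $X$, use the commutation of $X$ with its neighbours to allot $X^2$ to each bad descendant and $X$ to each good one, and invoke the insulation inequality $d(X)+K(X)\le 0$ to guarantee a nonnegative leftover power of $X$ before concluding via positivity of each factor. Your bookkeeping $m=1-k-d(X)\ge 1$ (resp.\ $\ge 2$) is in fact slightly more explicit than the paper's, which only records $N-2k-l\ge 0$ and dismisses the bad-parent case as ``very similar.''
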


\begin{proof} We only do the first item, the second one being very similar. Suppose that $z$ is a good vertex. Write the relation at $X$ as

 $$X^N\prod_{i=1}^{k}y_i^{-1}\prod_{j=1}^{l-1}x_j^{-1}z^{-1}=1 \Leftrightarrow$$

$$ X^{N-2k-l}\prod_{i=1}^{k}(y_i^{-1}X^2)\prod_{j=1}^{l-1}(x_j^{-1}X)(z^{-1}X)=1$$

\noindent It seems that we have forced a specific ordering of the vertices around $X$, but it will be seen from the argument that no loss of generality is incurred by doing this (and anyway, we are allowed to pick the ordering once). Now, $N-2k-l\geq0$, from the insulation hypothesis. This, together with the other hypothesis of the lemma, implies that 

$$ X^{N-2k-l}\prod_{i=1}^{k}(y_i^{-1}X^2)\prod_{j=1}^{l-1}(x_j^{-1}X)>1$$

\noindent Therefore, $1>z^{-1}X$, and the result follows.

\end{proof}

\begin{defi} If a vertex $X$ satisfies either Lemma \ref{badnode} or \ref{goodnode}, we say it satisfies condition $A$.
\end{defi}

\noindent With these ingredients in place, we can now prove Theorem C:

\begin{proof}[Proof of Theorem C] Let $\Gamma$ be an insulated graph, rooted at some node $Y$. Suppose that $\pi_1(\Gamma)$ is left-orderable. We will argue inductively along the height of the tree $\Gamma$. We basically want to prove that all vertices satisfy condition $A$. First, we claim the following:

\begin{claim2*} Let $y$ be a leaf of $\Gamma$. Then, the element of $\pi_1(\Gamma)$ determined by $y$ is nontrivial.
\end{claim2*}

\noindent To prove this, suppose that $y$ is trivial. Then, $y$ bounds a disk in $\Gamma$, which we can take to be embedded, by Dehn's Lemma. Notice that $y$ represents one of the exceptional fibers of one of the JSJ components of $\Gamma$. If $\Gamma$ is a Seifert-fibered space (\textit{i.e.} the graph $\Gamma$ has one node), it is well-known that elements represented by exceptional fibers are nontrivial in the fundamental group (unless, of course, $\Gamma$ is $S^3$, which is precluded by hypothesis). If $\Gamma$ has more than one JSJ component, then the disk bounded by $y$ must intersect the corresponding boundary component, by the same reason as above. This intersection is a collection of parallel closed loops. A standard innermost loop argument now implies that one of these loops must bound a disk, contradicting the incompressibility of the boundary. Therefore, $y$ must be nontrivial in $\pi_1(\Gamma)$.

\vskip 0.1in \noindent Now, it is straightforward to verify that if we reverse the inequality signs in the hypotheses of Lemmas \ref{badnode} and \ref{goodnode}, it is possible to reach analogous conclusions, but also with inequality signs reversed (by reversing inequality signs, we mean permuting right and left-hand sides, of course). If a vertex satisfies either of these ``reversed inequality'' conditions, then we say it satisfies condition $A^*$. We then make the following claim:

\begin{claim3*} Each vertex of $\Gamma$ satisfies either condition $A$ or condition $A^*$ (except for the root $Y$ and leaves, for which the conditions are vacuous). 
\end{claim3*}

\noindent We prove this by induction on the height of the rooted tree. First, it is straightforward to verify this for height equal to $1$: let $X$ be a vertex at height $1$, and observe that it has at least one leaf neighbour $y$ at height $0$. If $X$ were trivial in $\pi_1$, then the group relations would imply that $X=y^n$, for some $n\geq 2$. This would imply that $y$ is torsion. Since, for irreducible $3$-manifolds, having infinite fundamental group implies having torsion-free fundamental group, we see that $X$ cannot be trivial. It follows immediately that $X$ satisfies either condition $A$ or $A^*$. For the inductive step, suppose that all vertices at height $\leq k$ satisfy either of the conditions. Let $Z$ be a vertex at height $k+1$. We observe that all the descendents of $Z$ must satisfy the same condition. If that weren't the case, we could show that $1>Z>1$, which is a contradiction. Hence, without loss of generality, we suppose that the descendents all satisfy condition $A$. If $Z$ is bad, by insulation hypothesis, all its descendents are good. The ones that have descendents themselves, say $x_1,...,x_k$, satisfy Lemma \ref{goodnode} by inductive hypothesis, so we have indeed $Z>x_i^2$. The descendents that don't have descendents themselves, say $y_1,...,y_m$, are leaves, and so we have trivially that $Z\geq y_i^2$, bearing in mind that all weights are at most $-2$, since we take $\Gamma$ to be minimal . We conclude that $Z$ satisfies condition $A$. If $Z$ is good, with bad descendents $x_1,...,x_p$ and good descendents $y_1,...,y_q$, we clearly have that $Z^2>x_i$, since the $x_i$ satisfy Lemma \ref{badnode} by inductive hypothesis, and also that $Z>y_i$. Hence, $Z$ also satisfies condition $A$, and the claim is established.

\vskip 0.1in \noindent Notice that it follows in hindsight that all vertices of $\Gamma$ satisfy the same condition out of $A$ and $A^*$, since $\Gamma$ is connected. Without loss of generality, suppose that condition $A$ holds everywhere. This implies that the root $Y$ is $>$-maximal among the generators in the group presentation. Now, presenting $\Gamma$ as a rooted tree by choosing a different vertex for the root implies that $Y$ is not $>$-maximal, and this is a contradiction. Therefore, $\pi_1(\Gamma)$ cannot be left-orderable. That $\Gamma$ is a  Heegaard Floer $L$-space is just Corollary \ref{lati}. We are finished.

\end{proof}

\end{document}